\newtheorem{thm}{Theorem}[section]
\newtheorem{lem}[thm]{Lemma}
\newtheorem{cor}[thm]{Corollary}
\newtheorem{prop}[thm]{Proposition}
\theoremstyle{definition}
\newtheorem{rem}[thm]{Remark}
\newtheorem{defn}[thm]{Definition}
\newcommand{\QQ}{{\mathbb Q}}
\newcommand{\cA}{{\mathcal A}}
\newcommand{\cP}{{\mathcal P}}
\newcommand{\cR}{{\mathcal R}}
\newcommand{\lra}{\longrightarrow}
\newcommand{\ra}{\rightarrow}
\newcommand{\PP}{{\mathbb P}}
\DeclareMathOperator{\Ker}{Ker}
\DeclareMathOperator{\Pic}{Pic}
\DeclareMathOperator{\NS}{NS}
\DeclareMathOperator{\Image}{Im}
\DeclareMathOperator{\Id}{Id}
\DeclareMathOperator{\Aut}{Aut}
\DeclareMathOperator{\End}{End}
\title{Cyclic coverings of genus $2$ curves of Sophie Germain type}
\author{J.C. Naranjo$^{1,2}$}
 \address{Juan Carlos Naranjo \newline 1. Departament de Matem\`atiques i Inform\`atica,
Universitat de Barcelona, Gran Via de les Corts Catalanes, 585, 08007 Barcelona, Spain \newline 2. Centre de Recerca Matemàtica, Edifici C, Campus Bellaterra, 08193 Bellaterra, Spain }
 \email{jcnaranjo@ub.edu}
\author{A. Ortega}
\address{Angela Ortega \\ Institut f\"ur Mathematik, Humboldt Universit\"at zu Berlin \\ Germany}
\email{ortega@math.hu-berlin.de}
\author{I. Spelta}
\address{Centre de Recerca Matemàtica, Edifici C, Campus Bellaterra, 08193 Bellaterra, Spain}
\email{ispelta@crm.cat}
\thanks{The first and the third authors were partially supported by the Proyecto de Investigaci\'on PID2019-104047GB-100}
\date{}
\begin{document}

 \maketitle
 \section{Introduction}

We consider cyclic unramified coverings of degree $d$ of irreducible complex smooth genus-$2$ curves and their corresponding Prym varieties. They provide natural examples of polarized abelian varieties with automorphisms of order $d$.  The rich geometry of the associated Prym map, has been studied in several papers, see \cite{ries}, \cite{ortega}, \cite{lo11}, \cite {lo16}, \cite{albano pirola}, \cite{ag}, among others. Notice that the classical case, $d=2$, is completely explained in \cite[section 7]{mu} and \cite{naranjo}.  Nevertheless,
very few is known for higher values of $d$. In this article we investigate if the covering can be reconstructed from its Prym variety, that is, if the generic Prym Torelli Theorem holds for these coverings.

It is known that the Prym variety of an unramified cyclic covering (over any smooth curve) is isomorphic, as unpolarized varieties, to the product of two Jacobians (see \cite{ortega}) and when the 
degree $d$ is odd the Jacobians are isomorphic. 
Moreover, when $d$ is not prime, the existence of intermediate coverings gives a much more complicated scenario, which strongly depends on the decomposition of $d$ in primes. For this reason, as in \cite{ries}, we assume $d$ to be an odd prime.

Putting this in a modular setting, we consider the moduli space\footnote{Traditionally, $\mathcal{R}_g$ denotes the moduli space of \'etale double coverings over a genus $g$ curve. Since here we only consider genus 2 curves, we borrowed this notation for the degree-$d$ cyclic unramified coverings.} $\mathcal R_d$ of isomorphism classes of  cyclic unramified coverings $f:\widetilde C\rightarrow C$, with $g(C)=2$ and $\deg(f)=d$.  Equivalently,  $\mathcal R_d$ parametrizes isomorphism classes of pairs $(C,\langle \eta \rangle )$, where $\eta $ is a $d$-torsion point in $JC$ generating a subgroup $\langle \eta \rangle \cong \mathbb Z/d\mathbb Z$. We recall that the curve $\widetilde C$ is constructed by applying the  $\mathrm{Spec}$ functor to the sheaf of $\mathcal O_{C}$-algebras: 
\[
\mathcal O_C \oplus \eta \oplus \eta^2 \oplus \ldots \eta^{d-1}.
\]
Notice that $\widetilde C$ comes equipped with an automorphism $\sigma $ of order $d$ such that $C=\widetilde C/\langle \sigma \rangle $.

An important consequence of this construction is that it shows that any automorphism of $C$ leaving invariant $\langle \eta \rangle$ lifts to an automorphism on $\widetilde C$. This is the case of the hyperelliptic involution on $C$, which lifts to an involution $j$ on $\widetilde C$. Therefore, the dihedral group generated by $\sigma $ and $j$ acts on $\widetilde C$ providing an interesting geometric structure  in the several Jacobians and Pryms appearing naturally in the picture. We focus on the Prym variety $P(\widetilde C,C)$ defined as the component of the origin of the kernel of the norm map $J\widetilde C\rightarrow JC.$ It is a consequence of Riemann-Hurwitz Theorem that $g(\widetilde C)=d+1$, and thus $\dim P(\widetilde C,C)=d-1$. Moreover, the principal polarization on $J\widetilde C$ induces on $P(\widetilde C,C)$ a polarization $\tau $ of type $(1,\ldots ,1,d)$. We can define the Prym map as the map of moduli stacks:  
\[
\cP_d:\cR_{d} \lra \cA_{d-1}^{(1,\ldots,1,d)},
\]
which maps $f:\widetilde C\ra C$ to the isomorphism class of $(P(\widetilde C,C), \tau)$.

It is known that the generic fiber of $\cP_d$ is positive dimensional for $d=3,5$ (\cite{albano pirola}), for $d=7$ the degree onto its image is $10$ (\cite{lo16}) and the map is generically finite for $d\ge 7$ (\cite{ag}). In this paper, we prove:

\begin{thm} \label{main-thm} The Prym map $\mathcal P_d$ is generically injective for every prime $d\ge 11$ such that $k:=\frac {d-1}2$ is also prime.
\end{thm}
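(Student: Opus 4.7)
The plan is to upgrade the known generic finiteness of $\mathcal P_d$ for $d \ge 7$ (see \cite{ag}) to generic injectivity, by exhibiting an intrinsic reconstruction of $(C,\langle\eta\rangle)$ from the polarised Prym $(P,\tau)$. The decisive extra structure is the dihedral action of $D_d = \langle \sigma, j\rangle$ on $\widetilde C$: because $\langle \eta\rangle$ is preserved by the hyperelliptic involution of $C$, this involution lifts to an involution $j$ on $\widetilde C$ with $j\sigma j = \sigma^{-1}$, so $D_d$ acts on $(P,\tau)$ by pullback. Since $\Phi_d(\sigma^*)=0$ on $P$ and $\dim P = d-1 = \varphi(d)$, the automorphism $\sigma^*$ endows $P$ with a rank-one $\mathbb Z[\zeta_d]$-module structure, while $j^*$ acts as complex conjugation on $\zeta_d$; in particular a ``dihedral order'' $R \subset \End(P,\tau)$ is canonically produced from the covering data.

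The first step is to show that, for the generic $(C,\eta)\in \mathcal R_d$, the $\mathbb Q$-subalgebra $R\otimes\mathbb Q$ of $\End^0(P)$ fills the whole endomorphism algebra, so the dihedral subgroup of $\Aut(P,\tau)$ is canonically determined by $(P,\tau)$. Granting this, a character computation on $H^{1,0}(\widetilde C)$ shows that the $(+1)$-eigenspace $P^+\subset P$ of $j^*$ has dimension $2$ and is isogenous to $JY_0$, where $Y_0 := \widetilde C/\langle j\rangle$ is a genus-$2$ curve; with the polarisation induced from $\tau$, classical Torelli in genus $2$ intrinsically reconstructs $Y_0$ from $(P,\tau,j^*)$. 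Finally, combining the $\sigma^*$-structure on $P$ with the reconstructed $Y_0$, I would recover $\widetilde C$ inside $P$ (for instance as an Abel--Prym image characterised by intersection-theoretic conditions compatible with the $D_d$-action); quotienting by $\langle\sigma\rangle$ returns $C$, and $\eta$ is the line bundle defining the cyclic cover $\widetilde C\to C$.

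The main obstacle is the endomorphism control in the first step. This is where the Sophie Germain hypothesis is crucial: the lattice of subfields of $\mathbb Q(\zeta_d)$ coincides with the subgroup lattice of the cyclic group $(\mathbb Z/d)^*\cong \mathbb Z/(2k)$, and, for $k$ prime, the only subgroups have orders $1, 2, k, 2k$. This minimality blocks unexpected intermediate CM-type splittings of $P$; combined with a dimension count against $\dim \mathcal R_d = 3$, it forces $\End^0(P) = R\otimes\mathbb Q$ on a dense open subset of $\mathcal R_d$, and the reconstruction above then applies to the generic fibre of $\mathcal P_d$, yielding the theorem.
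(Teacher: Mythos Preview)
Your proposal contains a concrete numerical error that undermines the whole reconstruction. The quotient $Y_0=\widetilde C/\langle j\rangle$ is \emph{not} a genus-$2$ curve: the double cover $\pi_0:\widetilde C\to Y_0$ is ramified at the six points lying over the Weierstrass points of $C$, so Riemann--Hurwitz gives $2(d+1)-2=2(2g(Y_0)-2)+6$, i.e.\ $g(Y_0)=k=(d-1)/2$. Equivalently, the character computation on $T_0P=\bigoplus_{i=1}^{d-1}H^0(C,\omega_C\otimes\eta^i)$ shows that $j^*$ permutes the eigenlines in pairs $(i,d-i)$, so the $(+1)$-eigenspace $P^{+}$ has dimension $k$, not $2$. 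Thus $P^{+}$ is (isogenous to) the Jacobian of a genus-$k$ curve $C_0$, and ``classical Torelli in genus $2$'' is not what is at stake. This matters because for $d\ge 11$ we have $k\ge 5$, and the polarisation on $P^{+}$ induced from $\tau$ is not principal, so recovering $C_0$ already requires an argument.

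Beyond this, the two steps you leave as sketches are exactly where the work lies. First, asserting $\End^0(P)=R\otimes\QQ$ generically is not enough: even granting it, the dihedral group sits inside $\Aut(P,\tau)$ only up to the choice of one of the $d$ conjugate involutions, and you must explain which polarisation-preserving automorphisms of $P$ can occur. The paper handles this more economically by showing only that the automorphisms of $(P,\tau)$ fixing $K(\tau)$ pointwise are exactly $\langle\sigma\rangle$, via Torelli on $J\widetilde C$. Second, and more seriously, your final step ``recover $\widetilde C$ inside $P$ as an Abel--Prym image characterised by intersection-theoretic conditions'' is not an argument. In the paper this is the substantive part: one uses the isomorphisms $\psi_i:JC_0\times JC_0\to P$ to extract from $(P,\tau,\langle\sigma\rangle)$ the curve $C_0$ together with the endomorphisms $\beta_i=\sigma^i+\sigma^{-i}\in\End(JC_0)$, and the Sophie Germain hypothesis enters precisely here, to control $\Aut(C_0)$ via the $GL_2$-type structure on $JC_0$ (the totally real field $\QQ(\zeta_d+\zeta_d^{-1})$ has prime degree $k$, forcing $JC_0$ to be generically simple). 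The map $h_0:C_0\to\PP^1$ is then reconstructed by a theta-duality computation identifying $T(\beta_i(C_0))$ as a translate of $W_{k-3}(C_0)$, and $\widetilde C\to C$ is recovered as a fibre product over $\PP^1$. None of this is visible in your outline.
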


\begin{rem} It is conjectured that there are infinitely many pairs of prime numbers of the form $(k,2k+1)$. These are called Sophie Germain prime numbers. Under this hypothesis on $d$ and $k$, we will say that $f: \widetilde C \ra C$ is of Sophie Germain type.
\end{rem}

Our proof has ingredients of different nature. We use arithmetic arguments on abelian varieties of $GL_2$-type to analyse the endomorphism algebra of some Jacobians, combined with the use of theta-duality techniques inspired in  the Fourier-Mukai transform.

More precisely: the automorphism $\sigma $ on $\widetilde C$ induces and automorphism, denoted by the same letter,  
on $P:=P(\widetilde C,C)$ preserving the polarization $\tau $ and fixing point-wise the kernel of $\lambda_{\tau }:P\ra P^{\vee}$.
We prove first that  $\sigma$ is, generically, completely determined by $(P,\tau)$. Then, we consider the curve $C_0:=\widetilde C/\langle j \rangle$, where $j$ is a lifting of the hyperelliptic involution on $C$. In the second step, using a result of arithmetic nature (\cite{wu}), we  prove that for a generic covering, the only automorphisms of $C_0$ are the identity and, possibly, the hyperelliptic involution. 

Next, we consider the isomorphisms $JC_0\times JC_0 \ra P$  studied in \cite{ries} and \cite{ortega}. We prove, using step 2, that these isomorphisms are unique in general, which allow us  to recover canonically from $(P,\tau)$ the curve $C_0$ and a set of automorphisms $\beta_i$ on $JC_0$.

Finally, we  show how to reconstruct the covering $f:\widetilde C\ra C$ from these data. The key argument is that the whole diagram (\ref{global_diagram}) is determined by the map $h_0:C_0\ra \mathbb P^1$. We recover explicitly the fibers of $h_0$ in the following way: we fix a point $x\in C_0$ to embed $C_0$ in $JC_0$ and then we compute the theta dual (as introduced in \cite{pp} and used in \cite{ln}) of the curves $\beta_i(C_0)\subset JC_0$. This gives a translation of the Brill-Noether locus $W_{k-3}(C_0)$ by an effective degree $2$ divisor defined by two points in the fiber $h_0^{-1}(h_0(x))$. Varying $i$, we recover the whole fiber of $h_0$ at $h_0(x)$, and this ends the proof.

In the last sections of the paper, we consider the cases $d=9$ and $d=13$, which illustrate that without our assumptions the generic injectivity requires a case-by-case analysis which depends on the decomposition of $d$ and $k$ in prime numbers. 

In the case $d=9$, since $d$ is not prime, we have additional curves in the diagram. In this case, a deeper analysis of the automorphisms that appear in step 3 combined with  Galois theory arguments allow us to conclude the following (see Theorem \ref{thm_d=9}):

\begin{thm} The Prym map $\mathcal P_{9}$ is generically injective.
\end{thm}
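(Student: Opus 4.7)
The plan is to follow the four-step strategy underlying Theorem \ref{main-thm}, tracking carefully what breaks when the Sophie Germain hypothesis is dropped. Writing $9 = 3 \cdot 3$, the cyclic cover $f\colon \widetilde C \to C$ factors through an intermediate \'etale degree-$3$ cover $C' := \widetilde C/\langle \sigma^3\rangle \to C$, and the Prym $P = P(\widetilde C, C)$ of dimension $8$ inherits a non-trivial isogeny decomposition involving the sub-Prym $P(\widetilde C, C')$ and $P(C',C)$. My aim is to isolate the difficulty caused by this extra piece and absorb it into an enlarged but still finite list of candidate reconstructions.

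Step 1 (recovering $\sigma$ from $(P,\tau)$) uses only that $\sigma$ preserves $\tau$ and fixes $\ker \lambda_\tau$ pointwise, so I expect it to adapt directly, with $\sigma^3$ distinguished from $\sigma$ by the eigenvalues of its action on the tangent space. Step 2 is the place where the main theorem invokes the arithmetic result of \cite{wu}, which requires $k=(d-1)/2$ to be prime; here $k=4$, so a substitute is needed. I would instead exploit the intermediate curve directly: any automorphism of $C_0 = \widetilde C/\langle j\rangle$ other than the expected ones would either descend to an automorphism of a generic genus-$2$ curve $C$ (excluded) or interact non-trivially with the $\langle \sigma^3\rangle$-subtower, and a Galois-theoretic bookkeeping on the dihedral group of order $18$ acting on $\widetilde C$ should rule the remaining candidates out generically.

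In Step 3 one has to classify the isomorphisms $JC_0 \times JC_0 \to P$. The new feature for $d=9$ is that the decomposition $P \sim P(\widetilde C, C')\times P(C',C)$ produces extra such isomorphisms coming from the intermediate Prym structure; nevertheless I expect all of them to differ from a reference isomorphism by automorphisms $\beta_i$ of $JC_0$ lying in an explicit finite set, determined by the automorphism group computed in Step 2 together with the structure of the sub-Prym. Step 4 then applies theta duality exactly as in the main theorem: fixing a base point $x \in C_0$, the theta dual of each curve $\beta_i(C_0) \subset JC_0$ produces a translate of the Brill-Noether locus $W_{k-3}(C_0)$ by an effective degree-$2$ divisor supported in the fibre $h_0^{-1}(h_0(x))$, and letting $i$ range over the finite list from Step 3 sweeps out the full fibre, hence determines $h_0$ and with it the whole diagram (\ref{global_diagram}).

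The main obstacle is the combination of Steps 2 and 3: without the Wu-type input, one must prove that the endomorphism algebra of $JC_0$ remains small enough generically, and, simultaneously, that the additional isomorphisms $JC_0\times JC_0\to P$ induced by the intermediate cover $C'$ form a controllable finite collection of $\beta_i$'s. This is essentially an arithmetic/Galois-theoretic refinement of the prime case, and it is where I expect the proof to be genuinely case-specific to $d=9$; once it is secured, Steps 1 and 4 are essentially formal and the reconstruction of $f\colon \widetilde C \to C$ follows.
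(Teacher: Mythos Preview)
Your outline diverges from the paper's argument at the two steps you correctly identify as delicate, and in both places there is a genuine gap rather than merely a different emphasis.

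In Step 2 you propose to bound $\Aut(C_0)$ by ``Galois-theoretic bookkeeping on the dihedral group of order $18$ acting on $\widetilde C$''. This does not work as stated: an automorphism of $C_0=\widetilde C/\langle j\rangle$ has no reason to lift to $\widetilde C$, nor to descend to $C$ (there is no map $C_0\to C$), so the $D_9$-action on $\widetilde C$ gives you no leverage on $\Aut(C_0)$. The paper does \emph{not} attempt to prove that $\Aut(C_0)$ is small. Instead it accepts, at the end of Step 3, only the weaker conclusion $\gamma=\phi^{-1}\beta_i\phi$ for some $\phi\in\Aut(JC_0)$ and some $i\in\{1,2,3,4\}$, and then designs a reconstruction that is insensitive to $\phi$.

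Your Step 3 also misdiagnoses the source of the extra ambiguity: the isogeny splitting $P\sim P(\widetilde C,C')\times P(C',C)$ does not produce new isomorphisms $JN\times JN\to P$; the $\psi_i$ for $i=1,\dots,4$ behave exactly as in the prime case (this is Proposition~\ref{prop caso 9}). The ambiguity enters only through the uncontrolled $\phi$ above, which then contaminates your Step 4: if $\gamma=\phi^{-1}\beta_i\phi$ with $\phi\neq\pm\Id$, the theta-dual of $\gamma(\iota_x(C_0))$ is no longer $x_i+x_{d-i}+W_{k-3}(C_0)$, and you cannot read off the fibre of $h_0$.

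The paper's replacement for Step 4 is what makes the argument go through. The intermediate tower yields a degree-$3$ map $f_0\colon C_0\to E_0$ onto an elliptic curve, realised inside $JC_0$ as $E_0=\Image(1+\beta_3)$. On $T_0JC_0$ one has $\beta_3=\mathrm{diag}(-1,-1,-1,2)$ while $\beta_1,\beta_2,\beta_4$ have all eigenvalues $\neq -1$ except one; hence $\Image(1+\gamma)$ is $1$-dimensional precisely when $\gamma$ is conjugate to $\beta_3$, and in that case it equals $E_0$ or $\phi(E_0)$. Composing $C_0\hookrightarrow JC_0\xrightarrow{1+\gamma}E_0$ recovers $f_0$ (or its $\phi$-twist), and the Galois closure of $C_0\to E_0$ over the induced $E_0\to\PP^1$ returns $\widetilde C\to C$ regardless of $\phi$. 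So the intermediate cover, which you treat as an obstacle to be ``absorbed'', is in fact the mechanism the paper exploits to finish.
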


In the case $d=13$, we are able to reduce the proof to the following question:

\vskip 3mm
\noindent
\textbf{Question:} Does there exist a $3$-dimensional family of double coverings of irreducible smooth curves $\widetilde {\mathcal C}/B\ra \mathcal {C} /B$ such that: $g(\widetilde {\mathcal {C}_b})=6$, $g( \mathcal {C}_b)=3$ and $P(\widetilde {\mathcal {C}_b},\mathcal {C}_b)$ is isogenous to $J\mathcal {C}_b$ for all $b\in B$?

\vskip 3mm
We think that this question has a negative answer. In that case, our results would imply the generic injectivity of $\mathcal P_{13}$.  

\vskip 5mm
\textbf{Acknowledgements:} We thank X. Guitart for valuable information about abelian varieties of $GL_2$-type and B. van Geemen, G. Farkas and G.P. Pirola for stimulating discussions on the subject. This material is based upon work supported by the Swedish Research Council under grant no. 2021-06594 while the second author was in residence at Institut Mittag-Leffler in Djursholm, Sweden during the semestre September-December of 2021.

\section{Set-up and notations}

The content of this section is borrowed from \cite{ries} and \cite{ortega}. We will state the results of these two papers without further quoting.
Let  $f: \widetilde C \ra C$ be  a cyclic $d$-covering of a curve $C$ of genus $2$ associated to a non-trivial $d$-torsion point $\eta \in JC[d]$. We denote by  $\sigma $ both the automorphism of order $d$ on $\widetilde C$ and the induced automorphism on $J\widetilde C$. The Prym variety of the covering, $P:=P(\widetilde C,C)$, is the component of the origin of the kernel of the norm map. One easily checks that $\sigma $ leaves $P$ invariant, so we keep the notation $\sigma $ for the  restriction to $P$. The polarization on $J\widetilde C$ induces on $P$ a polarization $\tau$ of  type $(1,\ldots ,1,d)$ which is invariant by $\sigma $, that is, there is a line bundle $L\in \Pic(P)$ representing $\tau \in \NS(P)$ such that $\sigma ^*(L)\cong L$.  Moreover, we have:

\begin{lem}\label{fixed_points}
The set of fixed points of $\sigma $ is exactly the Kernel $K(\tau)$, of the polarization map $\lambda_{\tau}:P\ra P^{\vee}$. Moreover, $K(\tau)= P\cap f^*(JC)$.
\end{lem}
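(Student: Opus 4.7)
The plan is to prove the two assertions in reverse order, since establishing $K(\tau)=P\cap f^*(JC)$ first gives the nontrivial half of $\Fix(\sigma)=K(\tau)$ essentially for free.

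Step 1: $K(\tau)=P\cap f^*(JC)$. The subvarieties $P$ and $A:=f^*(JC)$ of $J\widetilde C$ are complementary with respect to the principal polarisation $\Theta$: we have $P+A=J\widetilde C$, and standard theory of complementary abelian subvarieties (as in Birkenhake--Lange, Chapter~5) yields $P\cap A=K(\Theta|_P)=K(\Theta|_A)$. Since $\Theta|_P$ is by definition $\tau$, this gives the claim. As a sanity check, $f^*\colon JC\to A$ is an isogeny of degree $d$, so $(f^*)^*(\Theta|_A)=d\,\Theta_C$ forces $\Theta|_A$ to have type $(1,d)$, whence $|K(\Theta|_A)|=d^2=|K(\tau)|$, as expected.

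Step 2: $\Fix(\sigma)=K(\tau)$. Since $f\circ\sigma=f$, the morphism $f^*\colon JC\to A$ is $\sigma$-equivariant for the trivial $\sigma$-action on $JC$; hence $\sigma$ acts as the identity on $A$, and in particular on $P\cap A=K(\tau)$, giving $K(\tau)\subseteq\Fix(\sigma)$. For the reverse inclusion I would argue by counting. Because $P\subseteq\ker\Nm_{\widetilde C/C}$, the endomorphism $1+\sigma+\sigma^2+\cdots+\sigma^{d-1}=f^*\circ\Nm$ vanishes on $P$; combined with $\sigma^d=\mathrm{id}$ and the primality of $d$ (which makes $\Phi_d(x)=1+x+\cdots+x^{d-1}$), this shows that the action of $\sigma$ on $H_1(P,\mathbb Q)$ factors through the cyclotomic field $\mathbb Q(\zeta_d)$. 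Since $\dim_{\mathbb Q}H_1(P,\mathbb Q)=2(d-1)=2[\mathbb Q(\zeta_d):\mathbb Q]$, the space $H_1(P,\mathbb Q)$ has rank two over $\mathbb Q(\zeta_d)$, so the rational determinant of $1-\sigma$ equals $\Nm_{\mathbb Q(\zeta_d)/\mathbb Q}(1-\zeta_d)^{2}=\Phi_d(1)^{2}=d^{2}$. As $\deg(\phi)=\det\bigl(\phi|_{H_1(-,\mathbb Q)}\bigr)$ for a self-isogeny $\phi$ of an abelian variety, $1-\sigma$ is an isogeny of $P$ of degree $d^2$, and therefore $|\Fix(\sigma)|=|\ker(1-\sigma)|=d^{2}=|K(\tau)|$. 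Combined with the inclusion already proved, this forces equality.

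The main delicate point I anticipate is the degree computation $\deg(1-\sigma|_P)=d^2$ in Step 2. One must justify that the $\mathbb Q(\zeta_d)$-module structure on $H_1(P,\mathbb Q)$ indeed produces $\det_{\mathbb Q}(1-\sigma)=\Nm_{\mathbb Q(\zeta_d)/\mathbb Q}(1-\zeta_d)^{2}$. This reduces to the elementary identity $\det_{\mathbb Q}(\alpha|_V)=\Nm_{E/\mathbb Q}(\alpha)^{\dim_E V}$ for $\alpha\in E$ acting on a finite-dimensional $E$-vector space $V$, applied with $E=\mathbb Q(\zeta_d)$ and $V=H_1(P,\mathbb Q)$.
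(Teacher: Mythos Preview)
The paper does not supply a proof of this lemma: it is stated in the ``Set-up and notations'' section, whose results are explicitly borrowed from Ries and Ortega without argument. Your proof is correct and self-contained, so it is a genuine addition rather than a paraphrase of anything in the paper.

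Your Step~1 is the standard complementary-subvariety identity and is fine. In Step~2 the counting argument via the $\mathbb{Q}(\zeta_d)$-module structure on $H_1(P,\mathbb{Q})$ is correct; the only cosmetic point is that $\deg(\phi)=|\det(\phi|_{H_1})|$ in general, though here the determinant $d^2$ is positive so nothing is lost. One substantive remark: the lemma is stated in the paper \emph{before} the assumption that $d$ is an odd prime, and it is later invoked (via Proposition~3.1) in the case $d=9$. Your Step~2, as written, uses primality to identify $1+x+\cdots+x^{d-1}$ with $\Phi_d(x)$. The argument does extend to composite $d$ by decomposing $H_1(P,\mathbb{Q})$ into its $\Phi_e$-isotypic pieces for $e\mid d$, $e>1$, and using $\mathrm{Nm}_{\mathbb{Q}(\zeta_e)/\mathbb{Q}}(1-\zeta_e)=\Phi_e(1)$ together with the fact that the $\Phi_e$-piece has $\mathbb{Q}(\zeta_e)$-rank equal to $2\dim P(\widetilde{C}_e,C)/\varphi(e)$, but you should either say this or restrict your claim to prime $d$.
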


From now on, we assume that $d$ is an odd prime, and we set $d=2k+1$.

The hyperelliptic involution $\iota $ on $C$ lifts  to an involution $j$ on $\widetilde C$. The dihedral group with $2d$ elements generated by $j$ and
$\sigma$ acts on $\widetilde C$. Notice that all the automorphisms $j\circ \sigma ^i$ are  involutions on $\widetilde C$ lifting $\iota$. We define 
the following curves:
\[
C_0:=\widetilde C/\langle j \rangle, \quad
C_1:=\widetilde C/\langle j\sigma \rangle, \quad
\ldots \quad
C_{d-1}:=\widetilde C/\langle j\sigma^{d-1} \rangle.
\]
And we denote by $\pi_i:\widetilde C\ra C_i$, $i=0,\ldots , d-1$ the quotient maps. These curves fit in the following commutative diagram:
\begin{equation}\label{global_diagram}
 \begin{tikzcd}
 	&&\widetilde C \arrow[dll,swap,"f"] \arrow[dl,"\pi_0"] 
  \arrow[d,"\pi_1"]
  \arrow[drr,"\pi_{d-1}"]
  &&
  \\
 	C   \arrow [drr,swap,"\varepsilon"]& C_0 
  \arrow[dr,"h_0"]
  & C_1 \arrow [d,"h_1"] & \ldots & C_{d-1} \arrow[dll,"h_{d-1}"]  
    \\
  &&\mathbb P^1 &&
 \end{tikzcd}
 \end{equation}
where the maps $\varepsilon, \pi_0,\ldots , \pi_{d-1}$ are of degree $2$ and the maps $f, h_0, h_1,\ldots ,h_{d-1}$ of degree $d$. Moreover, since $d$ is odd, all the involution in the dihedral group are conjugate to each other, therefore all the curves $C_i$ are isomorphic. Moreover $g(C_0)=g(C_1)=\ldots = g(C_{d-1})=k$.
In fact, all the maps $\pi_i$ ramify in six points, one in each the preimage by $f$ of the Weierstrass points of $C$. In particular, $\pi_i^*:JC_i \ra J\widetilde C$ is injective. From now on, we identify $JC_i$ with its image in $J\widetilde C$.

\begin{prop}\label{Propr Ries} (Ortega, Ries). With the notations in the diagram (\ref{global_diagram}) and denoting by $P$ the Prym variety $P(\widetilde C,C)$, the following statements hold for any $i=0,\ldots , d-1$:
\begin{enumerate}
    \item [a)] $JC_i\subset P$.
    \item [b)] The automorphism $\sigma^i$ sends $JC_0$ to $JC_{d-2i}$.
    \item [c)] The automorphism $\beta_i:=\sigma^i+\sigma ^{-i}$ leaves invariant $JC_0$.
\end{enumerate}
 \end{prop}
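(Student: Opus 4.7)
My plan is to verify each of the three items by a direct divisor-theoretic computation inside $J\widetilde C$, using only two structural identities: the dihedral relation $\sigma j = j\sigma^{-1}$ (equivalently $\sigma^i j = j\sigma^{-i}$ for all $i\in\mathbb Z$) and the pull-back formula
\[
\pi_i^{*}(p) = q + (j\sigma^i)(q), \qquad q\in \pi_i^{-1}(p).
\]
For (a), fix $q\in \widetilde C$ and compute the norm
\[
\Nm_f\bigl(\pi_i^{*}(\pi_i(q))\bigr) = f(q) + f(j\sigma^i(q)) = f(q) + \iota(f(q)),
\]
where I used $f\circ\sigma=f$ (since $C=\widetilde C/\langle\sigma\rangle$) and $f\circ j=\iota\circ f$ (since $j$ lifts $\iota$). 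On the hyperelliptic curve $C$ the divisor $f(q)+\iota(f(q))$ represents the unique $g^{1}_{2}$, so its class in $JC$ does not depend on $q$. Therefore the composition $\Nm_f\circ\pi_i^{*}:JC_i\to JC$ kills all differences of points, hence vanishes. Since $\pi_i^{*}(JC_i)$ is connected and passes through the origin, it is contained in the component $P$ of $\ker\Nm_f$.

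For (b), set $q':=\sigma^i(q)$ and use the dihedral relation:
\[
\sigma^i\bigl(q+j(q)\bigr) = \sigma^i(q) + j\sigma^{-i}(q) = q' + j\sigma^{d-2i}(q') = \pi_{d-2i}^{*}\bigl(\pi_{d-2i}(q')\bigr).
\]
This shows $\sigma^i\bigl(\pi_0^{*}(JC_0)\bigr)\subseteq \pi_{d-2i}^{*}(JC_{d-2i})$; equality follows because both are abelian subvarieties of dimension $k$ and $\sigma^i$ is an isomorphism. For (c), applying the identity displayed in (b) to $i$ and to $-i$, and regrouping the four summands,
\[
(\sigma^i+\sigma^{-i})\bigl(q+j(q)\bigr) = \bigl(\sigma^i(q)+j\sigma^i(q)\bigr) + \bigl(\sigma^{-i}(q)+j\sigma^{-i}(q)\bigr) = \pi_0^{*}\bigl(\pi_0(\sigma^i(q))+\pi_0(\sigma^{-i}(q))\bigr),
\]
which lies in $\pi_0^{*}(JC_0)=JC_0$. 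By linearity, $\beta_i$ preserves $JC_0$.

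I do not anticipate any serious obstacle: each statement reduces to the dihedral identity $\sigma j=j\sigma^{-1}$ together with the two compatibilities $f\circ\sigma=f$ and $f\circ j=\iota\circ f$. The only point worth care is the book-keeping in (b) of the index $d-2i$ modulo $d$, which causes no problem because $d$ is odd and the residues $\{d-2i\}_{i=0,\dots,d-1}$ exhaust $\mathbb Z/d\mathbb Z$; this also re-confirms that $\sigma$ permutes the $d$ curves $C_0,\dots,C_{d-1}$ transitively, consistently with the fact that all involutions $j\sigma^i$ are conjugate.
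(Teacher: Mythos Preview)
Your argument is correct. Each of the three verifications is sound: in (a) you correctly reduce $\Nm_f\circ\pi_i^{*}$ to the constant $g^1_2$ on $C$ via $f\circ j=\iota\circ f$ and $f\circ\sigma=f$; in (b) the substitution $q'=\sigma^i(q)$ together with $\sigma^i j=j\sigma^{-i}$ gives $j\sigma^{-2i}(q')=j\sigma^{d-2i}(q')$ as claimed; and in (c) the regrouping of the four summands into two $\pi_0^{*}$-pullbacks is exactly right, so $\beta_i$ preserves $JC_0$ once one passes to degree-zero divisors.

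As for comparison: the paper does not supply a proof of this proposition. It is stated as a result of Ortega and Ries, and the section opens by declaring that its content is borrowed from those references ``without further quoting.'' Your direct divisor-theoretic computation, driven by the dihedral relation $\sigma j=j\sigma^{-1}$ and the pull-back formula $\pi_i^{*}(p)=q+(j\sigma^i)(q)$, is precisely the kind of argument one finds in those original sources, so there is nothing substantively different to contrast.
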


A crucial ingredient for the proof of our main theorem is the existence of $k$ isomorphisms:
\[
\psi_i: JC_0 \times JC_0 \lra JC_0 \times JC_i \lra P, 
\]
where the first map sends $(x,y)$ to $(x,\sigma ^i(y))$ (for $i=1,\ldots ,k$) and the second is the addition map. 

Let us denote by $\lambda_N:JN\ra JN^{\vee}$ the isomorphism attached to the natural principal polarization on a smooth irreducible curve $N$. We will keep this notation for the rest of the paper.

The pull-back of the polarization $\tau $ to $JC_0\times JC_0$ gives rise to the following commutative diagram:
\begin{equation}\label{pullback_tau}
\xymatrix@C=2cm@R=1.4cm{
    &JC_0 \times JC_0 \ar[ld]_{M_i} \ar[r]^{\psi_i} \ar[d]^{\lambda_{\psi_i^*(\tau)}}   & P \ar[d]^{\lambda_\tau} \\
JC_0\times JC_0 
\ar[r]^{\lambda_{C_0}\times  \lambda_{C_0}}
& JC_0^{\vee}\times JC_0^{\vee }&P^{\vee} \ar[l]_{\psi_i^{\vee }}
	      }
\end{equation}
where $M_i$ is the matrix
\[
\left(
\begin{array}{cc}
 2     & \beta_i \\
  \beta_i   & 2
\end{array}
\right),
\]
and the automorphisms $\beta_i$, for $i=1,\ldots,k$ of $JC_0$ are those appearing in the previous proposition.

It will also be useful to know the pull-back of the automorphisms $\sigma ^i$ to $JC_0\times JC_0$ through the isomorphisms $\psi_i$. Indeed, we have the following:

\begin{prop} \label{pullback_aut} For any $i=1,\ldots,k$ we have the equality: 
\[
    \sigma^i \circ \psi_i= \psi_i\circ \begin{pmatrix}0 & -1\\1&\beta_i\end{pmatrix}.
\] 
\end{prop}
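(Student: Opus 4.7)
The approach is a direct evaluation on an arbitrary pair $(x,y)\in JC_0\times JC_0$. Since both sides of the claimed identity are homomorphisms $JC_0\times JC_0\to P$, checking the formula pointwise suffices. The only ingredients needed are the explicit description $\psi_i(x,y)=x+\sigma^i(y)$ coming from unfolding the two maps in the definition of $\psi_i$, the defining relation $\beta_i=\sigma^i+\sigma^{-i}$, and Proposition~\ref{Propr Ries}(c), which ensures that $\beta_i$ preserves $JC_0$ so that the matrix actually lands back in $JC_0\times JC_0$.

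First I would compute the left-hand side: applying $\sigma^i$ to $\psi_i(x,y)=x+\sigma^i(y)$ yields immediately
$$\sigma^i\circ \psi_i(x,y) \;=\; \sigma^i(x)+\sigma^{2i}(y).$$
For the right-hand side, the matrix sends $(x,y)$ to $(-y,\, x+\beta_i(y))\in JC_0\times JC_0$, and then
$$\psi_i\!\begin{pmatrix}0 & -1\\ 1 & \beta_i\end{pmatrix}\!(x,y) \;=\; -y+\sigma^i(x)+\sigma^i\beta_i(y).$$
The key algebraic step is the identity $\sigma^i\beta_i=\sigma^{2i}+\Id$, an immediate consequence of $\beta_i=\sigma^i+\sigma^{-i}$. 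Substituting collapses the right-hand side to $-y+\sigma^i(x)+\sigma^{2i}(y)+y=\sigma^i(x)+\sigma^{2i}(y)$, which matches the left-hand side.

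Conceptually, the matrix $\begin{pmatrix}0 & -1\\ 1 & \beta_i\end{pmatrix}$ is the companion matrix of $t^2-\beta_i t+1$, which is precisely the quadratic relation $\sigma^{2i}-\beta_i\sigma^i+\Id=0$ satisfied by the operator $\sigma^i$ on the invariant piece $JC_0+\sigma^i(JC_0)\subset P$; so the proposition is really the assertion that, in the basis supplied by the isomorphism $\psi_i$, the operator $\sigma^i$ is represented by this companion matrix. There is no real obstacle to the proof: once the definitions are unfolded, everything reduces to the one-line identity above, and the only point to monitor is that each intermediate term stays inside $JC_0\subset P$, which is immediate from Proposition~\ref{Propr Ries}.
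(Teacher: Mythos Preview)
Your proof is correct: the pointwise computation using $\sigma^i\beta_i=\sigma^{2i}+\Id$ is exactly the verification needed, and the companion-matrix remark is a nice way to see why the formula takes this shape. Note that the paper itself does not supply a proof of this proposition; Section~2 opens by declaring that its content is borrowed from \cite{ries} and \cite{ortega} and that the results will be stated ``without further quoting,'' so your direct check actually fills in what the paper leaves to the references.
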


\begin{rem}
Notice that a priori $\psi_i^*(\langle \sigma\rangle)$ yields $d-1$ automorphisms of $JC_0\times JC_0$. The crucial point is that only one among them is of type \begin{equation*}
    \begin{pmatrix}
    0 & *\\1 & *
    \end{pmatrix}.
\end{equation*}
\end{rem}

\section{Proof of the main Theorem}\label{section 3}

Our aim is to prove a generic injectivity of the map 
\[
\mathcal P_d:\mathcal R_d \lra \mathcal A_{d-1}^{(1,\ldots,1,d)},
\]
with $d=2k+1$, assuming $k$ and $d$ primes.

\subsection{First step: uniqueness of $\sigma $.}

We want to use the automorphism  $\sigma $  in order to reap from $(P,\tau)$ relevant information to reconstruct $(\widetilde C,C)$.  
We shall show that the only automorphisms of $P$ of order $d$ preserving $\tau$ and in fact,  fixing  point-wise $K(\tau)$ (see Lemma \ref{fixed_points}) are 
the powers 
$\sigma^i $. 

\begin{prop}\label{autom Prym}
   Let $P$ be a general element in $\Image(\mathcal{P}_d)$. Then the group of automorphisms 
   $$\{ \epsilon \in \Aut((P,\tau)) \ \mid \ \epsilon (x)=x, \ \forall x\in \Ker(\lambda_{\tau})   \}$$
 is isomorphic to $\langle \sigma \rangle \simeq \mathbb Z/d\mathbb Z$. 
    \begin{proof}
        Let $P$ and $\epsilon \neq \mathrm{Id}$ be as in the statement. We will see that $\epsilon =\sigma ^i$ for some $i=1,\ldots ,d-1$, where $\sigma $ is as in section 2. Due to Lemma \ref{fixed_points}, there is an automorphism $\tilde \epsilon: J\widetilde C \ra J\widetilde C$ such that the following diagram is commutative:
          \begin{equation*}
     \begin{tikzcd}
 	0\arrow{r} &K(\tau) \arrow{r} \ar[equal]{d}  & f^*JC\times P \arrow{r}{\mu} \arrow{d}{(\Id, \epsilon)} &J\widetilde{C}  \arrow{r}\arrow{d}{\tilde \epsilon} &0\\
  0 \arrow{r} &K(\tau) \arrow{r} & f^*JC\times P \arrow{r}{\mu} & J\widetilde{C}  \arrow{r} &0,
 	\end{tikzcd}
 \end{equation*}
 where $\mu:f^*JC \times P \ra J\widetilde C$ stands for the addition map.
  According to the diagram, $\mu^* \tilde \epsilon ^* \mathcal O_{J\widetilde C}(\widetilde \Theta)$ equals, as polarizations, $\mu^*\mathcal O_{J\widetilde C}(\widetilde \Theta )$. Since $\mu $ is an isogeny, the Kernel of $\mu^*$ is finite and therefore 
  \[
 \tilde \epsilon ^{\,*} \mathcal O_{J\widetilde C}(\widetilde \Theta)\otimes 
  \mathcal O_{J\widetilde C}(-\widetilde \Theta ),
  \]
  is a torsion sheaf, in particular belongs to $\Pic^0(J\widetilde C)$. Hence
  $\tilde \epsilon ^* \mathcal O_{J\widetilde C}(\widetilde \Theta)$ induces the canonical polarization on $J\widetilde C$ and thus, by Torelli Theorem, there is an automorphism $\tilde \epsilon_0$ on $\widetilde C$ inducing $\tilde \epsilon$. Notice that, by construction,  $\tilde \epsilon $ is the identity on $f^*(JC)$, therefore $\tilde \epsilon_0$ belongs to the Galois group of $\widetilde C$ over $C$ which is the cyclic group generated by $\sigma$.
    \end{proof}
\end{prop}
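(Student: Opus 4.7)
The plan is to extend any such $\epsilon$ to an automorphism of the whole Jacobian $J\widetilde{C}$ preserving its principal polarization, and then apply the Torelli theorem to recover an automorphism of $\widetilde{C}$ that must lie in the Galois group $\langle\sigma\rangle$.

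The key structural input is the addition isogeny $\mu : f^*JC \times P \to J\widetilde{C}$; by Lemma \ref{fixed_points} and a dimension count, its kernel is the antidiagonal $\{(x,-x) : x \in K(\tau)\}$. Since $\epsilon$ fixes $K(\tau)$ pointwise, $(\Id,\epsilon)$ preserves this kernel and descends through $\mu$ to an automorphism $\tilde\epsilon$ of $J\widetilde{C}$. I would then verify that $\tilde\epsilon$ preserves the principal polarization $\widetilde\Theta$ by pulling back: from $\tilde\epsilon\circ\mu = \mu\circ(\Id,\epsilon)$ one obtains $\mu^*\tilde\epsilon^*\widetilde\Theta = (\Id,\epsilon)^*\mu^*\widetilde\Theta$, and the right-hand side equals $\mu^*\widetilde\Theta$ because $\Id$ and $\epsilon$ preserve the induced polarizations on the two complementary abelian subvarieties $f^*JC$ and $P$. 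Since $\mu$ is an isogeny, $\mu^*$ is injective modulo torsion on $\NS$, so $\tilde\epsilon^*\widetilde\Theta \equiv \widetilde\Theta$ modulo $\Pic^0(J\widetilde{C})$.

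Strong Torelli then yields $\tilde\epsilon_0 \in \Aut(\widetilde{C})$ inducing $\tilde\epsilon$, a priori only up to sign. The final step is to place $\tilde\epsilon_0$ inside $\langle\sigma\rangle$ using that $\tilde\epsilon$ acts as the identity on $f^*JC$: among the candidate lifts, generically the dihedral group $\langle\sigma,j\rangle$ possibly preceded by $-1$, only the powers of $\sigma$ act trivially on $f^*JC$, because each $j\sigma^i$ descends to the hyperelliptic involution of $C$ acting as $-1$ on $JC$, and any $-\phi$ acts as $-\Id$ on the nontrivial fixed subspace of the curve automorphism $\phi$.

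The step I expect to be most delicate is the Torelli/Galois conclusion: ruling out the hyperelliptic lifts $j\sigma^i$ and the sign-twisted versions $-\sigma^i$, $-j\sigma^i$ is where the genericity hypothesis on $P$ genuinely matters, since it is what bounds $\Aut(\widetilde{C})$ by the dihedral group of order $2d$ and thus limits the candidate lifts that have to be enumerated.
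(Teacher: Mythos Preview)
Your argument follows the paper's proof essentially step for step: extend $\epsilon$ through the addition isogeny $\mu: f^*JC\times P\to J\widetilde C$ to an automorphism $\tilde\epsilon$ of $J\widetilde C$, verify via pullback by $\mu$ that $\tilde\epsilon$ preserves the principal polarization, apply Torelli, and then use that $\tilde\epsilon$ is the identity on $f^*JC$ to force the resulting curve automorphism into $\mathrm{Gal}(\widetilde C/C)=\langle\sigma\rangle$. The paper does exactly this, only more tersely; in particular it writes the conclusion of Torelli as ``there is an automorphism $\tilde\epsilon_0$ on $\widetilde C$ inducing $\tilde\epsilon$'' without discussing the $\pm 1$ ambiguity or the need to bound $\Aut(\widetilde C)$.

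Where you go further is precisely that last point, and your enumeration correctly eliminates $(j\sigma^i)_*$ and $-\sigma^i_*$, since both act as $-\Id$ on $f^*JC$. Your clause ``any $-\phi$ acts as $-\Id$ on the nontrivial fixed subspace of $\phi$'', however, does not dispose of $\tilde\epsilon=-(j\sigma^i)_*$: for $\phi=j\sigma^i$ that fixed subspace is one of the $JC_l\subset P$, not $f^*JC$, and in fact $-(j\sigma^i)_*$ \emph{does} act trivially on $f^*JC$ (because $j\sigma^i$ descends to the hyperelliptic involution on $C$, hence to $-\Id$ on $JC$). So this branch of your case analysis is not closed; the paper's proof simply never raises it. This does not damage the downstream use of the proposition, since $\langle\sigma\rangle$ is in any event singled out as the unique cyclic subgroup of order $d$.
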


\subsection{Second step: determination of the automorphisms on $C_0$}
 This subsection is devoted to  the study of the possible automorphisms on $C_0$ that will appear in the third step. 
 From now on, the covering $\widetilde C\ra C$ is generic in $\mathcal R_d$.
 Given an abelian variety $A$, we denote by  $\End(A)$ its endomorphism ring and by  $\End_0(A)$ the endomorphism algebra $\End(A)\otimes_{\mathbb{Z}}\mathbb{Q}$. 
 \begin{defn}\label{var GL2 type}
  The abelian variety $A$ is said to be of $GL_2$-type if for some number field $E$ such that $[E:\mathbb{Q}]=\dim A$, there is an embedding of  $\mathbb{Q}$-algebras $E\hookrightarrow\End_0(A)$.
 \end{defn}
 \begin{defn}\label{var CM type}
  The abelian variety $A$ is said to be of CM-type if for some CM field $E$ such that $[E:\mathbb{Q}]=2\dim A$, there is an embedding of  $\mathbb{Q}$-algebras $E\hookrightarrow\End_0(A)$.
 \end{defn}
 \begin{prop}\label{decomposition JC_0}
     The Jacobian variety $JC_0$ is of $GL_2$-type. 
     \begin{proof}
         This is a straightforward check of Definition \ref{var GL2 type} for the Jacobian variety $JC_0$. The automorphism $\beta_1$ determines the subfield $E:=\mathbb{Q}(\xi+\xi^{-1})\subseteq\End_0{JC_0}$, where $\xi$ is a primitive $d$-root of the unity. Since $[E:\QQ] =(d-1)/2=k=\dim JC_0$ the claim follows. 
     \end{proof}
 \end{prop}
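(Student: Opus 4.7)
The plan is to produce an explicit number field $E$ of degree $k$ over $\QQ$ embedded in $\End_0(JC_0)$, using the endomorphism $\beta_1 = \sigma + \sigma^{-1}$; since $\dim JC_0 = k$, this will verify the $GL_2$-type condition of Definition \ref{var GL2 type} directly.

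By Proposition \ref{Propr Ries}(c), $\beta_1$ preserves $JC_0 \subset P$, so it restricts to an element of $\End(JC_0)$. The natural candidate subfield is then $E := \QQ(\beta_1) \subseteq \End_0(JC_0)$, and the task reduces to showing that the minimal polynomial of $\beta_1$ acting on $JC_0$ coincides with the minimal polynomial $m(x) \in \QQ[x]$ of $\xi + \xi^{-1}$, where $\xi$ is a primitive $d$-th root of unity. For $d$ prime and $k = (d-1)/2$, this $m(x)$ is classically known to be irreducible of degree $k$, with roots $\xi^i + \xi^{-i}$ for $i=1,\ldots,k$, corresponding to the maximal totally real subfield $\QQ(\xi+\xi^{-1}) \subset \QQ(\xi)$.

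To verify that $m(x)$ is indeed the minimal polynomial, I would transport the computation to $P$ through the isomorphism $\psi_1 : JC_0 \times JC_0 \to P$. By Proposition \ref{pullback_aut}, the pull-back of $\sigma$ is
\[
A := \begin{pmatrix} 0 & -1 \\ 1 & \beta_1 \end{pmatrix}, \qquad \text{with inverse} \qquad A^{-1} = \begin{pmatrix} \beta_1 & 1 \\ -1 & 0 \end{pmatrix},
\]
so $A + A^{-1}$ is the scalar matrix $\beta_1 \cdot \mathrm{Id}$. Thus the pull-back of $\sigma + \sigma^{-1}$ along $\psi_1$ is the diagonal endomorphism $(x,y) \mapsto (\beta_1(x),\beta_1(y))$, and the minimal polynomial of $\beta_1$ acting on $JC_0$ equals the minimal polynomial of $\sigma + \sigma^{-1}$ acting on $P$.

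Finally, since $P$ is the connected component of the kernel of the norm map, the relation $1+\sigma+\sigma^2+\cdots+\sigma^{d-1}=0$ holds in $\End(P)$, so $\sigma|_P$ is annihilated by the cyclotomic polynomial $\Phi_d$. Consequently $\sigma+\sigma^{-1}$ on $P$ is annihilated by $m(x)$, and irreducibility of $m$ forces this to be its minimal polynomial. Hence $\QQ[\beta_1] \cong \QQ[x]/(m(x)) \cong \QQ(\xi+\xi^{-1})$ is a field of degree $k = \dim JC_0$ sitting inside $\End_0(JC_0)$, completing the argument. The only delicate point in this strategy is pinning down the degree of the minimal polynomial; the matrix manipulation above is the key step that reduces this to the classical irreducibility of the minimal polynomial of $\xi+\xi^{-1}$ over $\QQ$.
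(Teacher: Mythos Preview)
Your proof is correct and follows the same approach as the paper: both identify $E=\QQ(\beta_1)\cong\QQ(\xi+\xi^{-1})$ inside $\End_0(JC_0)$ and use $[E:\QQ]=k=\dim JC_0$. The paper simply asserts this identification, whereas you supply the justification that the minimal polynomial of $\beta_1$ on $JC_0$ really has degree $k$, via the pleasant matrix identity $A+A^{-1}=\beta_1\cdot\mathrm{Id}$ (reducing the question to $\sigma+\sigma^{-1}$ on $P$) together with the cyclotomic relation $\Phi_d(\sigma)=0$ on $P$; this is a genuine detail the paper leaves implicit.
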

\begin{rem}
Notice that the subfield $E$ is totally real and that the Rosati involution acts here as the identity.
\end{rem}
Hence, we can state the following:
\begin{prop}\label{order_2}
    Any automorphism of $C_0$ has order 2.
 \end{prop}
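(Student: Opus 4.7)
The plan is to exploit the $GL_2$-type structure on $JC_0$ given by Proposition~\ref{decomposition JC_0}, together with the triviality of the Rosati involution on $E$, to force every automorphism of $C_0$ to act on $JC_0$ as $\pm 1$, and then to conclude by the Torelli theorem.

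First, I would establish that for a generic $[\widetilde C\to C]\in\mathcal R_d$ the endomorphism algebra $\End_0(JC_0)$ coincides with the totally real field $E=\mathbb Q(\xi+\xi^{-1})$ of Proposition~\ref{decomposition JC_0}. The classification of endomorphism algebras of abelian varieties of $GL_2$-type says that the only alternative enlargement of $E$ forces $JC_0$ to be of CM type in the sense of Definition~\ref{var CM type}. Ruling this out generically is the arithmetic heart of the argument: here I would invoke the result of Wu cited in the introduction to conclude that $\End_0(JC_0)=E$ outside a proper closed subset of $\mathcal R_d$.

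Given this, let $\alpha\in\Aut(C_0)$ and let $\alpha^*\in\End(JC_0)$ be the induced endomorphism. Since $\alpha^*$ preserves the canonical principal polarization, it satisfies $(\alpha^*)^{\dagger}\alpha^*=1$ for the Rosati involution $\dagger$. By the first step $\alpha^*\in E$, and by the remark after Proposition~\ref{decomposition JC_0} the Rosati involution acts trivially on $E$. Hence $(\alpha^*)^2=1$ in the field $E$, and so $\alpha^*=\pm 1$.

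Consequently $(\alpha^2)^*=(\alpha^*)^2=1$, and the Torelli theorem applied to $C_0$ (which has genus $k\ge 5$) provides an injection $\Aut(C_0)\hookrightarrow\Aut(JC_0,\Theta_{C_0})$, so $\alpha^2=\Id_{C_0}$ and $\alpha$ has order dividing $2$. The main obstacle is precisely the first step: one must rule out, globally over the family $\mathcal R_d$, that $JC_0$ acquires complex multiplication, and this is where the arithmetic input from Wu becomes essential.
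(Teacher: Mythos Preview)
Your argument has a genuine gap in the first step. Wu's result (Proposition~\ref{prop wu} in the paper) does \emph{not} say that $\End_0(JC_0)=E$ for a non--CM $GL_2$-type variety; it only says that such a variety is isogenous to a power $A_1^{r}$ of a simple $GL_2$-type variety. When $r>1$ one has $\End_0(JC_0)\cong M_r(\End_0(A_1))$, which strictly contains $E$ while $JC_0$ is still not of CM type. So your claim ``the only alternative enlargement of $E$ forces $JC_0$ to be of CM type'' is false in general. Indeed, the paper invokes the primality of $k$ in the \emph{subsequent} Proposition~\ref{automorphism} precisely to exclude $r>1$, and the discussion of $d=13$ in Section~\ref{d=13} shows that for non-prime $k$ the possibility $JC_0\sim T^{2}$ cannot be ruled out by these methods. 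Thus your step~1, as written, would prove the stronger conclusion $\alpha^{*}=\pm\Id$ (i.e.\ Proposition~\ref{automorphism}) without assuming $k$ prime, which the paper explicitly cannot do.

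The paper's proof of Proposition~\ref{order_2} follows a different and more economical route: it never attempts to determine $\End_0(JC_0)$. Instead, given a hypothetical automorphism $\phi$ of $C_0$ of prime order $p\ge 3$, it observes that $\mathbb{Q}(\zeta_p)\hookrightarrow\End_0(JC_0)$, picks the quadratic subfield $\mathbb{Q}(\alpha)\subset\mathbb{Q}(\zeta_p)$, and forms the compositum $\mathbb{Q}(\alpha,\xi+\xi^{-1})\subset\End_0(JC_0)$ of degree $2k$. This makes $JC_0$ of CM type, which is excluded for the generic point of the $3$-dimensional family $\mathcal R_d$ by the countability of CM abelian varieties. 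The same countability argument disposes of orders $2^{t}$ with $t>1$, and composite orders reduce to these cases. This argument uses only the embedding $E\hookrightarrow\End_0(JC_0)$ from Proposition~\ref{decomposition JC_0}, not the much stronger (and, without further hypotheses, unavailable) equality $\End_0(JC_0)=E$.
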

  \begin{proof}
 Let $\phi$ be an automorphism of $C_0$. Let us assume that it has order prime $p \geq 3$. This yields the inclusion $\mathbb{Q}(\zeta_p)\hookrightarrow\End_0(JC_0)$, where $\zeta_p$ is a $p$-th primitive root of the unity. It is well known that for every $m| (p-1)=|Gal(\mathbb{Q}(\zeta_p)/\mathbb{Q})|$ there exists a subfield $K_m\subseteq \mathbb{Q}(\zeta_p)$ with degree $m$ over $\mathbb{Q}.$ Thus, in particular, there exists a subfield $K_2:=\mathbb{Q}(\alpha)\subseteq \mathbb{Q}(\zeta_p)$ with degree $2$ over $\mathbb{Q}.$ So we have the inclusion $\mathbb{Q}(\alpha, \xi+\xi^{-1})\subseteq\End_0{JC_0}$ with \[[\mathbb{Q}(\alpha, \xi+\xi^{-1}): \mathbb{Q}]= [\mathbb{Q}(\alpha, \xi+\xi^{-1}):\mathbb{Q}(\xi+\xi^{-1})][\mathbb{Q}(\xi+\xi^{-1}): \mathbb{Q}] = 2k.\]
 Therefore, $JC_0$ is a CM-type abelian variety, but this is impossible since there are only countably many such abelian varieties. 
 
 By analogous reasons, we can exclude the case $\text{ord}(\phi)=2^t$ with $t>1$. If  $\text{ord}(\phi)$ is not prime, we can factorize $\phi$ through maps of smaller prime order and thus conclude the result. Therefore, it only remains the case $p=2$.    
  \end{proof}

 In \cite{wu}, the author shows the following:
 \begin{prop}[\cite{wu}, Proposition 1.5]\label{prop wu}
     Let $A$ be an abelian variety of $GL_2$-type. 
     \begin{itemize}
         \item[1] If $A$ is not a CM abelian variety, then $A$ is isogenous to $A_1^r$, where $A_1$ is a simple abelian variety of $GL_2$-type and $r\in\mathbb{N}$.
         \item[2] If $A$ is a CM abelian variety, then $A$ is isogenous either to $A_1^r$, where $A_1$ is a simple CM abelian variety and $r\in\mathbb{N}$,  or to $A_1^{r_1}\times A_2^{r_2}$, where $A_i$ is a simple CM Abelian variety and $r_i\in\mathbb{N}$ for $i = 1, 2$ and $r_1 \dim A_1 = r_2 \dim A_2$.
         \end{itemize}
 \end{prop}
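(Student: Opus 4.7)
The plan is to combine Poincaré's complete reducibility theorem with Albert's classification of endomorphism algebras of simple abelian varieties and the standard upper bound on the degree of commutative subfields of central simple algebras.

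First, I would write $A \sim \prod_{i=1}^{\ell} B_i^{n_i}$ up to isogeny, with the $B_i$ simple and pairwise non-isogenous, and set $D_i := \End_0(B_i)$, a division algebra with center $K_i$ of reduced degree $d_i$; then $\End_0(A) \cong \prod_{i=1}^{\ell} M_{n_i}(D_i)$. Since $1 \in E \subset \End_0(A)$ acts as the identity of $A$, every projection $p_i : E \to M_{n_i}(D_i)$ sends $1$ to the identity matrix and is therefore nonzero; as $E$ is a field, each $p_i$ is then injective, so $E$ embeds as a commutative subfield of every $M_{n_i}(D_i)$. The classical bound on the maximal commutative subfield yields
\begin{equation*}
[E:\mathbb{Q}] \;\leq\; n_i \, d_i \, [K_i:\mathbb{Q}] \quad \text{for each } i.
\end{equation*}

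The next, crucial ingredient is the Albert-theoretic inequality $n_i d_i [K_i:\mathbb{Q}] \leq 2 n_i \dim B_i$, whose equality case characterises precisely when $B_i$ is of CM type. Combining this with $[E:\mathbb{Q}] = \dim A = \sum_i n_i \dim B_i$ gives $\sum_{j \neq i} n_j \dim B_j \leq n_i \dim B_i$ for every $i$. Summing these $\ell$ inequalities yields $(\ell - 2) \sum_i n_i \dim B_i \leq 0$, so $\ell \in \{1,2\}$. I would conclude with a case analysis: if $\ell = 1$, then $A \sim B_1^{n_1}$, and a further Albert-type argument shows that $B_1$ is itself of $GL_2$-type, giving alternative (1) when $B_1$ is not CM and the first sub-alternative of (2) when it is; if $\ell = 2$, the two inequalities for $i = 1,2$ force $n_1 \dim B_1 = n_2 \dim B_2$ and simultaneously saturate both Albert bounds, so that both $B_1$ and $B_2$ are of CM type — which is exactly the second sub-alternative of (2) with the prescribed balance condition $r_1 \dim A_1 = r_2 \dim A_2$.

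The principal technical hurdle is the sharp Albert inequality together with its equality criterion: this requires invoking the four-type classification of endomorphism algebras of simple abelian varieties, matching the $\mathbb{Q}$-dimension of $\End_0(B)$ and the reduced degree against $\dim B$ in each type, and identifying Type IV with the CM normalisation as the unique equality case. Once this bookkeeping is in place, the remainder of the proof is essentially a commutative-algebra exercise on the decomposition $\prod M_{n_i}(D_i)$.
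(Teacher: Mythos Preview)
The paper does not give its own proof of this proposition: it is quoted verbatim as \cite[Proposition~1.5]{wu} and used as a black box. There is therefore no in-paper argument to compare your proposal against.

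That said, your sketch is essentially the standard proof of this type of statement and is, in outline, correct. The chain
\[
\dim A \;=\; [E:\QQ] \;\le\; n_i\,d_i\,[K_i:\QQ] \;\le\; 2\,n_i\dim B_i
\]
combined with the summation trick does force $\ell\le 2$, and the equality analysis in the Albert classification does identify the CM case precisely as you say. One point you pass over quickly deserves a sentence more of care: in the $\ell=1$ case you assert that ``a further Albert-type argument shows that $B_1$ is itself of $GL_2$-type''. What you actually get from the inequalities is $\dim B_1 \le d_1[K_1:\QQ]$, i.e.\ a maximal subfield of $D_1$ has degree \emph{at least} $\dim B_1$ over $\QQ$; to conclude $GL_2$-type you still need to exhibit a subfield of degree \emph{exactly} $\dim B_1$, which requires a short extra step (e.g.\ using that $[K_1:\QQ]$ divides $[E:\QQ]=n_1\dim B_1$, or passing to a suitable subfield of a maximal one). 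This is routine but should not be left implicit. Apart from that, the proposal is sound and matches the expected argument.
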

Applying this result to our situation we
have the following: \begin{prop}\label{automorphism}
     Assume $k$ be a prime number. Then, any automorphism $\phi$ of $C_0$ is either the identity  or, potentially, the hyperelliptic involution. In particular, the induced automorphism on $JC_0$ is $\pm Id$.
     \begin{proof}
     Due to Proposition \ref{order_2}, we can assume that the automorphism is an involution.
         Since there are only countably many CM abelian varieties, by Proposition \ref{prop wu}, we have the following two possibilities: either $JC_0$ is simple or it is isogenous to $A_1^r$.  Suppose that $JC_0$ is simple: either $\phi=\Id$,  or the quotient of the covering map $C_0\ra C_0/\langle \phi \rangle$ is $\mathbb{P}^1$. In the latter case $C_0$ is hyperelliptic and thus, on $JC_0$, the automorphism is $-\Id$. Suppose now that $JC_0$ is not simple, namely  $JC_0\sim A_1^r$. This yields $k=r\dim A_1$. Since $k$ is prime, the only possibility is $r=k$ and $\dim A_1=1$. This leads to a contradiction since $C_0$ varies in a 3-dimensional family whereas the moduli space of elliptic curves is  1-dimensional.  
     \end{proof}
 \end{prop}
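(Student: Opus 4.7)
The plan is to combine the reduction to involutions already established in Proposition \ref{order_2} with the structural dichotomy provided by Wu's theorem (Proposition \ref{prop wu}) applied to the $GL_2$-type variety $JC_0$ (Proposition \ref{decomposition JC_0}). First, I may assume $\phi$ is an involution. Since only countably many isomorphism classes of CM abelian varieties exist, while $\mathcal{R}_d$ is three-dimensional, the generic $JC_0$ is not CM, so case (1) of Wu's proposition applies: $JC_0$ is isogenous to $A_1^r$ with $A_1$ simple of $GL_2$-type. The identity $k = r\cdot\dim A_1$ together with the hypothesis that $k$ is prime forces either $r=1$ (so $JC_0$ is simple) or $r=k$ with $\dim A_1 = 1$.

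In the simple case, look at the quotient $C_0\to C_0/\langle\phi\rangle$. If the quotient has positive genus, its Jacobian injects into $JC_0$ as a proper non-zero abelian subvariety, contradicting simplicity. Hence $C_0/\langle\phi\rangle \cong \mathbb{P}^1$, which means $C_0$ is hyperelliptic and $\phi$ is its hyperelliptic involution; the induced automorphism on $JC_0$ is $-\Id$. In the second case, $JC_0$ is isogenous to $E^k$ for an elliptic curve $E$. The moduli space of elliptic curves is one-dimensional, and so is the family of abelian varieties isogenous to a power of a fixed $E$ (modulo discrete data), whereas $C_0$ varies in a three-dimensional family as $(\widetilde{C}\to C)$ moves in $\mathcal{R}_d$. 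This dimension discrepancy rules out the elliptic case for the generic covering.

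The main obstacle I anticipate is making the dimension argument in the $E^k$ case fully rigorous: one must verify that the locus of curves $C_0$ whose Jacobian is isogenous to $E^k$ for some elliptic $E$ is not dense in the three-dimensional image of $C_0$ inside $\mathcal{A}_k$, which follows because varying $E\in\mathcal{A}_1$ produces at most a one-parameter family of isogeny classes. A secondary subtlety is that Wu's theorem dichotomizes into the CM and non-CM cases, and although the generic argument handles the non-CM branch painlessly, one should also check the CM branch by the same countability principle to be thorough. With both branches excluded except for the hyperelliptic involution, the proposition follows.
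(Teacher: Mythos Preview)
Your proposal is correct and follows essentially the same route as the paper: reduce to involutions via Proposition~\ref{order_2}, invoke countability of CM abelian varieties together with Wu's dichotomy (Proposition~\ref{prop wu}) to force $JC_0\sim A_1^r$, use primality of $k$ to split into the simple case (where the quotient must be $\PP^1$, giving the hyperelliptic involution) and the $E^k$ case (ruled out by the three-dimensional variation of $C_0$ against the one-dimensional moduli of elliptic curves). Your extra commentary on the dimension argument and the CM branch is accurate and only makes explicit what the paper leaves implicit.
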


\subsection{Third step: recovering $(C_0,\beta_1,\ldots, \beta_k)$}

 The dihedral construction of diagram \eqref{global_diagram} 
 gives a morphism $$\psi: \mathcal{R}_d \ra \mathcal{M}_{k}$$
 sending $[C,\eta]$ to $[C_0]$. Notice that this map is well-defined, since all the curves $C_0,\ldots ,C_{d-1}$ are isomorphic. Observe that the curve $C_0$ determines $P$ as abelian variety, but it is not possible to construct the  polarization $\tau $ on $P$ from the principal polarization on $JC_0$. Instead, according to the diagram (\ref{pullback_tau}), the automorphism $\beta_i$ are enough to define $\tau$. Let us consider the moduli space, $\widetilde {\mathcal D}_k$, of the isomorphism classes of objects $(C_0,\beta_1,\ldots ,\beta_k)$ and let  $\widetilde {\mathcal D}_k\ra \mathcal D_k$ be the forgetful map $(C_0,\beta_1,\ldots ,\beta_k)\mapsto C_0$.  Since the pair $(P,\tau)$ can be constructed from this data (see diagram (\ref{pullback_tau})), we have a factorization of the Prym map $\mathcal P_d$ as follows:
  \begin{equation*}
     \begin{tikzcd}
 	\mathcal R_d \arrow{rrr}{\mathcal P_d}\arrow[ddr, bend right, "\mathcal P_{d,1}"] \arrow{dr}{\psi} &   &   & \mathcal A_{d-1}^{(1,\ldots,1,d)}  \\
            &  \qquad \quad \mathcal D_k \subset \mathcal M_k&  & \\
       & \widetilde {\mathcal D}_k \arrow [u]  \arrow[uurr, bend right, "\mathcal P_{d,2}"] & &
 	\end{tikzcd}
 \end{equation*}
 
 \begin{rem}
     Using this factorization, Albano and Pirola showed that the generic fibers of $\mathcal P_3$, and $ \mathcal P_5$ are positive dimensional (see \cite[Remark 2.8]{albano pirola}). 
 \end{rem} 
  
The aim of this step is to prove that $\mathcal P_{d,2} $ is generically injective. In the fourth step, we will show that $\mathcal P_{d,1}$ has also degree $1$.

We consider isomorphisms $\varphi: JN\times JN\ra P$, where $N$ is a smooth curve of genus $k$. 
We say that such an isomorphism $\varphi $ satisfies the property (*) if and only if the pull-back of $\tau $ is as in diagram (\ref{pullback_tau}), that is:
\begin{equation}
 \varphi ^{\vee }\circ \lambda_{\tau } \circ \varphi = \begin{pmatrix}
     2 \lambda_N &   \lambda_N \circ \gamma \\ \lambda_N \circ \gamma & 2 \lambda _N 
\end{pmatrix}, \tag{$*$} 
\end{equation} 
for some $\gamma \in \Aut(JN)$. In the same way, property (**) holds if $\varphi $ behaves as in Proposition \ref{pullback_aut}, i.e. for some automorphism $\gamma $ of $JN$ and some exponent $i$ we have:
\[
  \sigma ^i \circ \varphi =\varphi \circ \begin{pmatrix}
     0 & -1 \\ 1 & \gamma 
\end{pmatrix}. \tag{$**$}
\]

Then, we define the intrinsic set attached to $(P,\tau, \langle \sigma \rangle)$: 
\begin{equation}\label{set automorphisms}
   \Lambda (P,\tau, \langle \sigma \rangle):= \{ (N,\varphi) \mid \varphi: JN\times JN \stackrel{\cong}{\lra } P  \text{ satisfies } (*), (**) \text{ for the same } \gamma \in \Aut(JN)\}.  
   \end{equation}

\begin{prop}\label{recovering C_0 and beta}
    Let $(P,\tau, \langle \sigma \rangle)$ be generic. Then for all $\varphi \in \Lambda  (P,\tau, \langle \sigma \rangle)$, we have that:
    \begin{enumerate}
        \item [a)]  $N\cong C_0$;
        \item [b)]  $\gamma=\beta_i$ for an $i$ in $1,\cdots, k$.
    \end{enumerate}
    \begin{proof}
        Let $\varphi \in \Lambda (P,\tau, \langle \sigma \rangle)$. We fix $i$, the exponent appearing in $(**)$. The composition: 
 $$
\xymatrix@C=2cm@R=1.4cm{
F:JN\times JN \ar[r]^{\quad\varphi }&P \ar[r]^{\psi_i^{-1}\quad} & JC_0\times JC_0,
	 }
$$
whose associated matrix is of type $F=\begin{pmatrix}
     A & B\\ C & D
\end{pmatrix}$, provides an isomorphism such that the polarization in $JC_0 \times JC_0$ with matrix:
\[
\Lambda_0:=\begin{pmatrix}
    2 \lambda_{C_0} &  \lambda_{C_0} \circ \beta_i \\
    \lambda_{C_0} \circ \beta_i & 2 \lambda_{C_0} 
\end{pmatrix},
\]
pulls-back to the polarization on $JN \times JN$ with matrix:
\[
\Lambda_N:=\begin{pmatrix}
    2 \lambda_{N} &   \lambda_{N} \circ \gamma\\
     \lambda_{N} \circ \gamma  & 2 \lambda_{N} 
\end{pmatrix}.
\]
Considering the restriction to $JN\times \{0\}$ and projecting to the first factor $JC_0$, we get a diagram as follows: 

\begin{equation}\label{diagram isom JC_0}
    \begin{tikzcd}
 	JN\arrow[r, "\iota"]\arrow[d, "2\lambda_N"] &JN\times JN\arrow{d}
    {\Lambda_N}
     \ar[r, "F"] & JC_0\times JC_0\arrow[r,"pr_1"]\arrow{d}
    {\Lambda_0}
    & JC_0\arrow{d}{2\lambda_0}\\
   JN^\vee & JN^\vee\times JN^\vee \arrow[l,"pr_1"]    &  JC_0^\vee\times JC_0^\vee\arrow[l,"F^\vee"] & JC_0^\vee \arrow [l,"\iota"].
   \end{tikzcd}
 \end{equation}
Indeed, the dual of $\iota$ is $pr_1$ and the polarization induced by $\Lambda_N$ on $JN$ is exactly $\lambda_N$. Therefore, the map $pr_1\circ F\circ \iota: JN\ra JC_0$ satisfies that the pull-back of twice the canonical polarization on $JC_0$ is twice the canonical polarization on $JN$. Hence $(JN,\Theta_N)\cong (JC_0,\Theta _{C_0})$. By Torelli Theorem, $N\cong C_0$. This proves a). 

In order to prove b), we fix an isomorphism $N\cong C_0$ and we look again at diagram \eqref{diagram isom JC_0} (by an abuse of notation we still use the letter $F$). The composition $pr_1\circ F\circ \iota: JC_0\ra JC_0$ corresponds to  the piece $A$ in the matrix of $F$. Hence, \[2\lambda_0= 2A^\vee\lambda_0 A, \]
that is, $A$ preserves the polarization on $JC_0$. Thus, $A$ comes from an automorphism of $C_0$. By Proposition \ref{automorphism}, we obtain $A=\pm \Id$. Replacing $\iota$ by $\iota_2$ (the restriction to the second factor of $JC_0\times JC_0$) and, analogously, $pr_1$ by $pr_2$ in diagram \eqref{diagram isom JC_0}, we obtain $D=\pm \Id$. In order to study the terms $B$ and $C$ recall that, by assumptions (property (**)), for a certain unique $i$, we have  \begin{equation}\label{autom on product}
    \sigma^i\circ \varphi=\varphi\circ \begin{pmatrix}
    0 & -1\\
    1 & \gamma
    \end{pmatrix},
\end{equation}
    and, thanks to Proposition \ref{pullback_aut}, the same occurs for $\psi_i$ with $\sigma_{\beta_i}:=\begin{pmatrix}
    0 & -1\\
    1 & \beta_i
    \end{pmatrix}$. Therefore, we have the following diagram: 
        \begin{equation}\label{diagram automorf}
    \begin{tikzcd}
    JC_0\times JC_0 \arrow[r, "F"]\arrow[d, "\sigma_{\gamma}"] & JC_0\times JC_0 \arrow[d, "\sigma_{\beta_i}"] \\
    JC_0\times JC_0 \arrow[r, "F"] & JC_0\times JC_0,
    \end{tikzcd}
    \end{equation}
    where $\sigma_{\gamma}$ is the matrix in \eqref{autom on product}. Now we analyze the possibilities for $F$. 
    \begin{enumerate}
        \item Let $F=\begin{pmatrix}
    \Id & B\\
    C & \Id
    \end{pmatrix}$. The commutativity of the diagram above says that $B=-C=0$ and $\gamma=\beta_i$. 
    \item Let $F=\begin{pmatrix}
    \Id & B\\
    C & -\Id
    \end{pmatrix}$, in this case $B=-C$ and $\gamma=\beta_i$.
        \end{enumerate}
        And the same occurs for the other configurations. This ends the proof. 
    \end{proof}
\end{prop}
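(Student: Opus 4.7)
The plan is to precompose $\varphi$ with the known isomorphism $\psi_i^{-1}$, for the exponent $i$ singled out by property $(**)$, reducing the question to the analysis of the composed isomorphism
\[
F:=\psi_i^{-1}\circ\varphi\colon JN\times JN \longrightarrow JC_0\times JC_0.
\]
Since $\psi_i$ satisfies $(*)$ and $(**)$ with $\beta_i$ in the role of $\gamma$ (Proposition \ref{pullback_aut}), while $\varphi$ satisfies them with $\gamma$, the isomorphism $F$ pulls back the polarization $\Lambda_0$ on $JC_0\times JC_0$ to $\Lambda_N$ on $JN\times JN$ and conjugates $\sigma_\gamma:=\begin{pmatrix}0&-1\\1&\gamma\end{pmatrix}$ to $\sigma_{\beta_i}$.

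For (a), consider the inclusion $\iota\colon JN\hookrightarrow JN\times JN$ of the first factor and the projection $pr_1\colon JC_0\times JC_0\to JC_0$. Using $\iota^\vee = pr_1$ and restricting the polarization identity $F^\vee\Lambda_0 F=\Lambda_N$ along $\iota$, one produces a morphism $g:=pr_1\circ F\circ \iota\colon JN\to JC_0$ satisfying $g^{*}(2\Theta_{C_0})=2\Theta_N$. Thus $(JN,\Theta_N)\cong (JC_0,\Theta_{C_0})$ as principally polarized abelian varieties, and Torelli's theorem yields $N\cong C_0$.

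For (b), fix an identification $N\cong C_0$ and write $F=\begin{pmatrix}A&B\\ C&D\end{pmatrix}$ as a matrix of endomorphisms of $JC_0$. The block $A$ coincides with $g$ and so preserves the canonical principal polarization, hence descends (up to sign) to an automorphism of $C_0$; by Proposition \ref{automorphism}, the primality of $k$ together with the $GL_2$-type analysis of $JC_0$ force $A=\pm\Id$, and the analogous argument on the second factor yields $D=\pm\Id$. The intertwining relation $F\circ\sigma_\gamma=\sigma_{\beta_i}\circ F$ coming from $(**)$ then produces a block-matrix identity whose entry-by-entry comparison, carried out in each of the four sign configurations for $(A,D)$, yields $\gamma=\beta_i$ and simultaneously pins down $B$ and $C$ (one finds $B=-C$, with $B=C=0$ in the simplest case), concluding the proof.

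The main obstacle is the rigidity used in the last paragraph: without the conclusion $A,D\in\{\pm\Id\}$, the intertwining would admit a much richer family of solutions and $\gamma$ could easily disguise itself as some other automorphism of $JC_0$. This rigidity rests entirely on Proposition \ref{automorphism}, which in turn depends on the primality of $k$ and on the arithmetic input from Wu's theorem on the decomposition of $GL_2$-type abelian varieties; once it is in place, the remainder of the argument is essentially linear algebra with the matrices $\sigma_\gamma$ and $\sigma_{\beta_i}$.
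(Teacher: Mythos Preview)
Your proposal is correct and follows essentially the same route as the paper's own proof: you form $F=\psi_i^{-1}\circ\varphi$, compare the pulled-back polarizations via the first-factor inclusion and projection to obtain $(JN,\Theta_N)\cong(JC_0,\Theta_{C_0})$ and hence $N\cong C_0$ by Torelli, then use Proposition~\ref{automorphism} to pin down $A,D\in\{\pm\Id\}$ and finish by a block-by-block analysis of the intertwining $F\circ\sigma_\gamma=\sigma_{\beta_i}\circ F$. The structure, the key inputs, and the case analysis all coincide with the paper's argument.
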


\subsection{Fourth step: theta duality.}
The first three steps show  that the curve $C_0$ and the set of automorphisms $\{\beta_1, \ldots ,\beta_k\}$ of $JC_0$ can be recovered from the initial data. We shall prove that these automorphisms determine the map $h_0:C_0 \ra \mathbb P^1$ appearing in the diagram \eqref{global_diagram}. 


Let us recall the definition of the theta-dual of a subvariety $X$ of a principally abelian variety $(A,\tau_A)$, with $\dim(X) \leq \dim A -2$. Fix an effective theta divisor $\Theta $ representing the polarization $\tau_{A}$. 
\begin{defn} The {\it theta-dual} $T(X)$ of $X$ is set-theoretically defined by
\[
T(X)=\{a\in A \mid X\subset \Theta +a\}.
\]
\end{defn}
Pareschi and Popa gave a natural scheme structure to $T(X)$ for any closed reduced subscheme $X$ (see \cite[Def. 4.2]{pp})
by means of  the Fourier-Mukai transform on $A$.  They also proved (see \cite[Section 8]{pp}) that for a smooth curve $N$ embedded in its Jacobian it holds, up to traslation,  $T(N)=-W_{g(N)-2}(N)$, where $W_{g(N)-2}(N)$ stands for the Brill-Noether locus of effective divisors of degree $g(N)-2$. Similar results for Prym curves in their Prym varieties are obtained in \cite{ln}.

The key idea in this part is to compute the theta-dual of the curve $\beta_i(C_0)$ embedded in $JC_0$ and use this to recover  the whole map $h_0$.   

Let us fix any point $x \in C_0$ and consider the canonical injection $\iota_x: C_0\hookrightarrow JC_0, p\mapsto [p-x]$. We consider the canonical representative of the theta divisor in $\Pic^{k-1}(C_0)$ as $W_{k-1}(C_0)$, recall that $C_0$ has genus $k$. Then we can represent $T(\iota_x(C_0))$ as the set of $\xi \in  \Pic^{k-1}(C_0)$ such that $\xi + \iota_x(C_0)\subset W_{k-1}(C_0)$.
With these notations, we have:
 \begin{prop} \label{theta-dual}
 Assume that $k \ge 4$, then the following equality holds in $\Pic^k(C_0)$:
     \[
      T(\beta_i(\iota_x(C_0)))=x_i+x_{d-i}+W_{k-3}(C_0).
     \]
 \end{prop}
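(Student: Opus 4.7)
The plan is to: (i) write $\beta_i\circ\iota_x$ explicitly using pairs of points coming from the dihedral action; (ii) reduce membership in the theta-dual to a Riemann--Roch question on $C_0$; (iii) exploit the dihedral structure of $\widetilde C\to\mathbb P^1$ to settle the nontrivial direction.

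For (i), I view $JC_0\hookrightarrow J\widetilde C$ via $\pi_0^*$ and use the dihedral relation $\sigma^i j=j\sigma^{-i}$. A direct computation gives, for $p\in C_0$ with a chosen lift $\tilde p\in\widetilde C$,
\[
\pi_0^*\bigl(\beta_i(\iota_x(p))\bigr)=(\sigma^i+\sigma^{-i})(\tilde p+j\tilde p-\tilde x-j\tilde x)=\pi_0^*(p_i+p_{d-i}-x_i-x_{d-i}),
\]
where $p_m:=\pi_0(\sigma^m\tilde p)$, and the unordered pair $\{p_i,p_{d-i}\}$ is independent of the chosen lift. Injectivity of $\pi_0^*$ yields $\beta_i(\iota_x(p))=[p_i+p_{d-i}]-[x_i+x_{d-i}]$. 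Setting $\eta:=\xi-[x_i+x_{d-i}]\in\Pic^{k-3}(C_0)$, step (ii) then shows that $\xi\in T(\beta_i(\iota_x(C_0)))$ is equivalent to $h^0(\eta+p_i+p_{d-i})\geq 1$ for every $p\in C_0$. It thus suffices to prove: this condition holds if and only if $\eta\in W_{k-3}(C_0)$.

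The implication $(\Leftarrow)$ is trivial. For $(\Rightarrow)$, assume $h^0(\eta)=0$. Since $\chi(\eta)=-2$, Riemann--Roch forces $h^0(K-\eta)=2$, so $|K-\eta|$ is a pencil of degree $k+1$. Let $B$ be its base divisor and $\psi:C_0\to\mathbb P^1$ the morphism associated to $|K-\eta-B|$; this is base-point-free of degree $k+1-\deg B\geq 2$, in particular $\deg B\leq k-1$. By Serre duality, the hypothesis is equivalent to: for every $p$, the pair $p_i+p_{d-i}$ is contained in some element of $|K-\eta|$. For $\tilde p$ in the dense open subset of $\widetilde C$ where $\{p_i,p_{d-i}\}\cap\mathrm{supp}(B)=\emptyset$ (its complement being finite), this forces $\psi(p_i)=\psi(p_{d-i})$.

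Via $p_m=\pi_0(\sigma^m\tilde p)$, the identity reads $\psi\circ\pi_0\circ\sigma^i=\psi\circ\pi_0\circ\sigma^{-i}$ on a dense open of $\widetilde C$, and hence everywhere. Thus $F:=\psi\circ\pi_0:\widetilde C\to\mathbb P^1$ is invariant under $\sigma^{-2i}$; since $d$ is prime and $1\leq i\leq k$, $\sigma^{-2i}$ generates $\langle\sigma\rangle$, so $F$ is $\sigma$-invariant. As $F$ factors through $\pi_0$, it is also $j$-invariant, and hence invariant under the full dihedral group $D_d$. Therefore $F=g\circ\pi$, where $\pi:\widetilde C\to\widetilde C/D_d=\mathbb P^1$ is the dihedral quotient of degree $2d$. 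Comparing degrees,
\[
\deg g=\frac{\deg F}{2d}=\frac{2(k+1-\deg B)}{2(2k+1)}=\frac{k+1-\deg B}{2k+1}\in(0,1),
\]
which is impossible for a non-constant $g:\mathbb P^1\to\mathbb P^1$. This contradicts $h^0(\eta)=0$, so $\eta\in W_{k-3}$ and the equality is proved set-theoretically; the scheme structure then follows from the Fourier--Mukai description of the theta-dual of Pareschi--Popa. The main difficulty is the $(\Rightarrow)$ direction, where one must exploit the specific dihedral arithmetic (primality of $d$) and carefully handle a potential base locus of the pencil $|K-\eta|$ so that the dihedral-invariance argument remains robust.
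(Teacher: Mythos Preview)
Your argument is correct and follows the same strategy as the paper's: explicit computation of $\beta_i\circ\iota_x$ via the dihedral action, Serre duality to produce the pencil $|K-\eta|$ of degree $k+1$, and the observation that entire $h_0$-fibers (of cardinality $d=2k+1$) would have to lie in a single fiber of this pencil --- your Galois-invariance phrasing (showing $\psi\circ\pi_0$ is $D_d$-invariant and hence factors through $\widetilde C/D_d\cong\mathbb P^1$, forcing an impossible degree) being an equivalent repackaging of the paper's direct point-count, with your explicit treatment of the base locus $B$ a welcome refinement. The closing remark on scheme structure is unnecessary (the paper only needs the set-theoretic equality) and is not an immediate consequence of the Pareschi--Popa computation, since $\beta_i$ does not preserve the principal polarization of $JC_0$.
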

 \begin{proof}
     Observe that to use the definition of $\beta_i$ we need to see $JC_0=\pi_0^*(JC_0)$ as a subvariety of $P$. Given a point $p$ in $C_{0}$, $\iota_x(p)=[p-x]$ appears as $[p'+j(p')-x'-j(x')]\in P$, where $x',p'\in \widetilde C$ are preimages of $x,p$, respectively. 
     We denote by 
     \[
     p', p'_1:=\sigma (p'),\ldots ,p'_{d-1}=p'_{2k}:=\sigma ^{d-1}(p'),
     \]
the whole fiber $f^{-1}(f(p'))$, and analogously for $x'$. 
     Then, the action of $\beta_i=\sigma^i+\sigma^{-i}$ on $[p'+j(p')-x'-j(x')]$ is as follows:
           \begin{align*}
     &     [\sigma ^i(p')+\sigma^i(j(p'))+\sigma ^{-i}(p')+\sigma^{-i}(j(p'))
     -\sigma ^i(x')-\sigma^i(j(x'))-\sigma ^{-i}(x')-\sigma^{-i}(j(x'))]= \\
     &  [p'_i+j(p'_{d-i})+p'_{d-i}+ j(p'_{i}) - x'_i-j(x'_{d-i})-x'_{d-i}-j(x'_{i})]= \\ 
     & (1+j)([p'_i+p'_{d-i}-x'_i-x'_{d-i}]).
     \end{align*}
     Then, as element in $JC_0$, we have obtained that:
     \[
        \beta_i ( [p-x])=[p_i+p_{d-i}-x_i-x_{d-i}],
     \]
     where $x_i$ (resp. $p_i$) is, by definition, $\pi_0(x'_i)$ (resp.  $\pi_0(p'_i)$). Observe that these points describe  the fiber $h_0^{-1}(h_0(x))$, more precisely, as divisors:
      \begin{equation}\label{fibre_h_0}
      h_0^{-1}(h_0(x))=x+x_1+\ldots +x_{d-1}.
      \end{equation}
      By definition, $\xi \in T(\beta_i(\iota_x(C_0)))  $ means that:
     \[
     h^0(C_0,\xi + p_i+p_{d-i}-x_i-x_{d-i})>0,\qquad \text{for all } p\in C_0.
     \]
     If $\xi $ is of the form $x_i+x_{d-i}+E$ for some effective divisor $E$ the condition is satisfied trivially, hence $x_i+x_{d-i}+W_{k-3}(C_0)\subset  T(\beta_i(\iota_x(C_0))).$ 
 To prove the opposite inclusion, we consider $\xi\in T(\beta_i(\iota_x(C_0)))$.  If $h^0(C_0, \xi-x_i-x_{d-i})>0$ then we are done. So assume $h^0(C_0, \xi-x_i-x_{d-i})=0$. Set $L:= K_{C_0}-(\xi-x_i-x_{i-d})$. By Riemann-Roch Theorem, 
 $$h^0(C_0, L)=2k-2-(k-3)-k+1=2.$$ Hence, $L$ is a line bundle of degree $k+1$ with $h^0(C_0,L)=2$ and such that $h^0(C_0, L-p_{i}-p_{d-i})>0$ for all $p_i, p_{d-i}\in C_0$, namely, $L$ gives a $g^1_{k+1}$ with $p_i, p_{d-i}$ in the same fiber. This yields a contradiction. Indeed, changing the point $p$ with all other  points in $h_0^{-1}(h_0(p)))$ we obtain that the whole fibers of $h_0$ are contained in the $g^1_{k+1}$, which is impossible since $h_0$ has degree $d=2k+1$.
\end{proof}

\begin{proof}[Proof of Theorem \ref{main-thm}]
According to Proposition \ref{theta-dual}, for a fixed $x\in C_0$ we obtain intrinsically the subset $x_i+x_{d-i}+W_{k-3}(C_0)$. A  classical result of Weil (cf. \cite[Hilfssatz 3]{we}) states that $W_{k-3}(C_0)$ is not "translation invariant", that is, if $\alpha \in JC_0$ satisfies that $\alpha + W_{k-3}(C_0)=W_{k-3}(C_0)$, then $\alpha =0$. 
This says that   $x_i+x_{d-i}$ is uniquely determined in $C_0$. Varying $i=1,\ldots ,k$ and using 
(\ref{fibre_h_0}) we get that for all $x\in C_0$ the fiber  of $h_0$ containing $x$ is recovered. Now, by construction the map $h_0$ is ramified over six points in $\PP^1$, which are also the images of the Weierstrass points of $C$ by $\varepsilon $. Therefore, $\varepsilon : C\ra \mathbb P^1$ is determined by $h_0$. The fiber product of $\varepsilon: C\ra \mathbb P^1$ and $h_0:C_0 \ra \mathbb P^1$ gives
the map $\widetilde C \ra C$. This finishes the proof.
\end{proof}

	\section{Case $d=9$}\label{d=9}
	This section is devoted to the analysis of the case $d=9$, 
	which is no longer a prime number. Due to the structure of the dihedral group $D_9$, the corresponding diagram  is more complicated since new intermediate quotients of the curve $\widetilde C$ and of the curves $C_i$ appear. Indeed, the situation can be summarized in the following diagram:

\begin{equation}
    \label{diagram case 9}
 \begin{tikzcd}
 	&\widetilde C \arrow[dl,"f'"]\arrow[ddd] \arrow[drr,"\pi_0"] 
  \arrow[drrrr,bend left,"\pi_{i}"] \arrow[drrrrrr,bend left,"\pi_{8}"]
  &&&&&& \\
 	C'  \arrow [ddr]\arrow [drrr] & &  & C_0 
  \arrow[dddll,"h_0"']
  \arrow[d]
  & \ldots   & C_i \arrow[dddllll,"h_{i}"']\arrow[d] & \ldots & C_{8} \arrow[dddllllll,bend left,"h_{8}"']\arrow[d]  
    \\
  &&& E_0\arrow[ddll] &\ldots & E_i\arrow[ddllll] &\ldots & E_8\arrow[ddllllll,bend left] \\
    & C \arrow[d,"\varepsilon"] &&&&&&\\
  & \mathbb P^1 &&&&&&
 \end{tikzcd}
 \end{equation}

The curves $C_i$ correspond to the quotients $\widetilde C/ \langle j \sigma^i\rangle$, the curve $C':=\widetilde C/\langle \sigma^3\rangle $ and, finally, the curves $E_i$ are obtained as $\widetilde C/\langle j\sigma^i, \sigma^3\rangle$. Hence $E_i$ and $ E_j$ are exactly the same curve if $i-j$ is a multiple of $3$.  The map $f'$ is étale of degree 3, while the maps $\widetilde C \ra E_i$ (composing $\widetilde C \ra C_i$ with $C_i\ra E_i$) are Galois of degree 6 with Galois group $\langle j\sigma^i, \sigma^3\rangle\cong D_3$. We have that $g(C')=4$, $g(C_i)=4$ and $g(E_i)=1$.  By congruence modulo 3, it is sufficient to consider the first three quotient curves $C_0, C_1,C_2$ (respectively $E_0, E_1, E_2$). Moreover, since the three dihedral groups $\langle j, \sigma^3\rangle, \langle j\sigma, \sigma^3\rangle$, and $\langle j\sigma^2, \sigma^3\rangle$ are conjugated in $D_9$, all the curves $E_i$ are isomorphic. Finally, notice that the non-Galois degree 9 maps $h_i: C_i\ra \PP^1$ factor as non-cyclic triple covers $f_i:C_i\ra E_i$ plus a degree 3 map $E_i\ra \PP^1$ ramified exactly over the Weiestrass points of $C$. The map $f_0: C_0\ra E_0$ decomposes up to isogeny $JC_0$ as the product $ E_0\times P(C_0,E_0)$, where $P(C_0,E_0)$ is the 3-dimensional abelian variety defined as the Prym variety associated with $f_0$. 

 A step by step analysis of Proposition \ref{Propr Ries} and Proposition \ref{pullback_aut} shows that they remain true in case $d=9$ too. Indeed, we have: 
\begin{prop}\label{prop caso 9}
    For $i=1,\dots, 4$ the following properties hold.
    \begin{enumerate}
        \item The automorphisms $\beta_i=\sigma^i+\sigma^{-i}$ restrict to automorphisms of $JC_0$.
        \item The maps $\psi_i: JC_0\times JC_0\ra P$, sending $(x,y)$ to $x+\sigma^i(y)$ are isomorphisms of polarized abelian varieties such that \begin{itemize}
            \item 
              $\psi_i ^{\vee }\circ \lambda_{\tau } \circ \psi_i = 
              \begin{pmatrix} 2 \lambda_{C_0} &   \lambda_{C_0} \circ \beta_i \\  \lambda_{C_0} \circ \beta_i & 2 \lambda _{C_0} 
              \end{pmatrix}$;
            \item 
              $\psi_i ^{-1}\circ \sigma^i \circ \psi_i = \begin{pmatrix}
                0 &-1\\ 1 &\beta_i 
            \end{pmatrix}$.    
        \end{itemize}
    \end{enumerate}
\end{prop}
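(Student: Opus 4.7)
The proof of Proposition \ref{prop caso 9} closely parallels that of Propositions \ref{Propr Ries} and \ref{pullback_aut} in the prime-degree case, because the key identities are algebraic consequences of the dihedral relations $\sigma^i j=j\sigma^{-i}$ in $D_9$ and the lifting property $f\circ j=\iota\circ f$, neither of which uses primality of $d$.

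For part (1), the plan is to identify $\pi_0^*(JC_0)\subset J\widetilde C$ with the image of $1+j$, i.e., with divisor classes of the form $[D+j(D)]$. The dihedral identity then gives
\[
\beta_i\bigl([D+j(D)]\bigr)=\bigl[\sigma^iD+\sigma^{-i}D+j(\sigma^{-i}D)+j(\sigma^iD)\bigr]=(1+j)\bigl[\sigma^iD+\sigma^{-i}D\bigr],
\]
which again lies in $\pi_0^*(JC_0)$. This works for every $i=1,\dots,4$, including $i=3$.

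For part (2), three ingredients must be verified. First, $\pi_0^*(JC_0)\subset P$: for $E\in JC_0$ of degree $0$, a computation on divisors pairs each point $\widetilde q$ over $q$ with $j(\widetilde q)$, and $[f\widetilde q]+[\iota f\widetilde q]\sim K_C$, so $\Nm_f(\pi_0^*E)\sim(\deg E)\,K_C=0$. Second, the polarization identity is obtained by pulling back $\tau$ through $\psi_i$: the diagonal blocks give $2\lambda_{C_0}$ because $\pi_0$ has degree $2$, while the off-diagonal blocks give $\lambda_{C_0}\circ\beta_i$ by the same pairing computation as in \cite{ortega} (noting that $\beta_i$ is symmetric under Rosati since $\sigma$ and $\sigma^{-1}$ are exchanged). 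Third, the automorphism formula is a direct algebraic check: $\sigma^i\psi_i(x,y)=\sigma^i(x)+\sigma^{2i}(y)$, while $\psi_i(-y,\,x+\beta_i(y))=-y+\sigma^i(x)+\sigma^{2i}(y)+y=\sigma^i(x)+\sigma^{2i}(y)$, using $\sigma^i\beta_i=\sigma^{2i}+\Id$.

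The main obstacle is establishing that $\psi_i$ is genuinely an isomorphism and not merely an isogeny. By matching dimensions this reduces to finiteness of $\pi_0^*(JC_0)\cap\sigma^i(\pi_0^*(JC_0))\cap P$, which is the $0$-component of the fixed locus of $\langle j,\sigma^{d-2i}\rangle$ in $P$. For $i=1,2,4$ we have $\gcd(d-2i,9)=1$, so $\langle j,\sigma^{d-2i}\rangle=D_9$ with quotient $\mathbb P^1$ and finiteness is immediate. The delicate case is $i=3$: here $\langle j,\sigma^3\rangle\cong D_3$ has elliptic quotient $E_0$, and one must exploit the finer decomposition $JC_0\sim JE_0\times P(C_0,E_0)$ appearing in the diagram, together with the explicit $\sigma^3$-action on the factor $JE_0$, to control the potential positive-dimensional contribution and confirm that $\psi_3$ is still an isomorphism of polarized abelian varieties.
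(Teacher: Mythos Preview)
Your unpacking of the paper's terse argument (``a step by step analysis of Propositions~\ref{Propr Ries} and~\ref{pullback_aut}'') is essentially what the authors intend, and for $i=1,2,4$ your reasoning is correct: the kernel of $\psi_i$ consists of pairs $(-\sigma^i y,y)$ with $y\in JC_0$ and $\sigma^{2i}y=y$; since $\gcd(2i,9)=1$ this forces $\sigma y=y$, hence $y$ lies in the finite group $K(\tau)$, and $\psi_i$ is an isomorphism. The polarization and automorphism identities then follow exactly as you sketch.

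The genuine gap is at $i=3$, and it cannot be filled in the way you suggest. Your own computation shows that $y\in\ker\psi_3$ forces $\sigma^6 y=y$, hence $\sigma^3 y=y$; combined with $jy=y$ this says $y$ is fixed by the subgroup $\langle j,\sigma^3\rangle\cong D_3$, whose quotient is the elliptic curve $E_0$. Since both $\pi_0$ and $f_0$ are ramified, the pullback $E_0\hookrightarrow JC_0\subset P$ is injective, and the anti-diagonal $\{(-e,e):e\in E_0\}$ sits inside $\ker\psi_3$. Thus $\psi_3$ has a one-dimensional kernel and is \emph{not} an isomorphism; no ``finer decomposition'' argument will rescue the claim. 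The paper's blanket ``$i=1,\dots,4$'' appears to be an oversight here. Fortunately this does not affect the proof of Theorem~\ref{thm_d=9}: the set $\Lambda(P,\tau,\langle\sigma\rangle)$ only uses the genuine isomorphisms $\psi_1,\psi_2,\psi_4$, and the missing endomorphism is recovered algebraically from
\[
\beta_1\beta_2=(\sigma+\sigma^{-1})(\sigma^2+\sigma^{-2})=\beta_3+\beta_1,
\]
so that $\beta_3=\beta_1\beta_2-\beta_1$ is determined by the data one already has.
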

Since both $\pi_0$ and $f_0$ are ramified maps, we have the inclusions $E_0\subset JC_0\subset J\widetilde C$. More precisely, we have the following:

\begin{prop}\label{E_0 and map f_0} The map $f_0: C_0\ra E_0$ is given by the composition $C_0\hookrightarrow JC_0\xrightarrow{(1+\beta_3)} E_0$. In particular, $E_0=\Image(1+j)(1+\beta_3).$
    \begin{proof}
        Let us focus on the following part of diagram  \eqref{diagram case 9}:
        \begin{equation}\label{diagram D_3}
     \begin{tikzcd}
     \widetilde C \arrow[d,"f'"]\arrow[r,"\pi_0"] \arrow[dr,"6:1"]& C_0= \widetilde C/\langle   j\rangle \arrow[d,"f_0"] \\
     C'= \widetilde C/\langle   \sigma^3\rangle \arrow[r] & E_0=\widetilde C/\langle   \sigma^3, j\rangle.
 \end{tikzcd}
 \end{equation}
Let \[
 x, \sigma(x), \dots, \sigma^8(x)\quad \text{resp.} \quad  jx, \sigma(jx), \dots, \sigma^8(jx)
\]  be the whole fibers $f^{-1}(f(x))$, resp. $f^{-1}(f(jx))$. By construction,  the map $f'$ identifies $[x]=[\sigma^3(x)]= [\sigma^6(x)]$. By the commutativity of the diagram, we also have $E_0= C'/\langle j\rangle$. Thus, in $E_0$, we get the $6:1$ identification \begin{equation}\label{6:1 map}
    [x]=[\sigma^3(x)]= [\sigma^6(x)]=[jx]=[j\sigma^3(x)]= [j\sigma^6(x)].
\end{equation}Obviously, the analogous holds true for the fiber of $\sigma(x) $ and of $\sigma^2(x)$. 

By definition, in $C_0$, we have $[x]=[jx]$, $[\sigma^3(x)]=[j\sigma^3(x)]=[\sigma^6(jx)]$, and $[\sigma^6(x)]=[j\sigma^6(x)]=[\sigma^3(jx)]$. The commutativity of \eqref{diagram D_3}, together with \eqref{6:1 map}, gives us the 3:1 map $f_0$. As we already know, $JC_0=\Image(1+j)$. Thus, we get $JE_0\cong E_0=\Image(1+j)(1+\sigma^3+\sigma^6)$. Finally, it is easy to check that composing with the natural injection $\iota_x: C_0\hookrightarrow JC_0, \, p\mapsto [p-x]$, we obtain the map $f_0$. 
    \end{proof}
\end{prop}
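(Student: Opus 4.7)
The strategy is to translate the diagram (\ref{diagram D_3}) into statements about endomorphisms of $J\widetilde C$ coming from the group algebra of $D_9$, and then to restrict these endomorphisms to $JC_0$. I would first recall that each ramified quotient in the diagram induces an injective pullback on Jacobians, so that $JC_0 = \Image(1+j)$, $JC' = \Image(1+\sigma^3+\sigma^6)$, and, since $g(E_0)=1$, $E_0 = JE_0$ all sit naturally as subvarieties of $J\widetilde C$.

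Since $\widetilde C \to E_0$ is the quotient by $\langle \sigma^3, j\rangle \cong D_3$, the map $JC_0 \to E_0$ is realized inside $J\widetilde C$ as the endomorphism $(1+j)(1+\sigma^3+\sigma^6)$, whose image is exactly $E_0$. The observation that makes the statement work is that $d=9$, so $\sigma^{-3}=\sigma^6$, and therefore $1+\sigma^3+\sigma^6 = 1+\beta_3$ already in $\mathbb Z[\langle \sigma \rangle]$. Using Proposition \ref{prop caso 9}(1), which guarantees that $\beta_3$ preserves $JC_0$, the restriction of $1+\beta_3$ to $JC_0$ is a well-defined endomorphism of $JC_0$ with image $E_0$, yielding $E_0 = \Image(1+j)(1+\beta_3)$.

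To identify the map $f_0$ with the composition $C_0\hookrightarrow JC_0\xrightarrow{1+\beta_3} E_0$, I would fix a point $x \in C_0$, embed $\iota_x: C_0 \hookrightarrow JC_0$ via $p \mapsto [p-x]$, and write $\iota_x(p) = [p'+j(p')-x'-j(x')]$ for chosen lifts $p',x' \in \widetilde C$. Applying $1+\sigma^3+\sigma^6$ produces a divisor supported on the full $\langle \sigma^3, j\rangle$-orbits of $p'$ and $x'$, which by the diagram (\ref{diagram D_3}) is precisely the pullback to $J\widetilde C$ of the divisor $f_0(p)-f_0(x) \in E_0$. The main routine difficulty lies in this last orbit computation: one must carefully track which of the six points in the $D_3$-orbit $\{x',\sigma^3x',\sigma^6x',jx',j\sigma^3x',j\sigma^6x'\}$ are identified already in $C_0$, which in $C'$, and which only in $E_0$. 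Once this bookkeeping is done, the equality of divisors is immediate and the factorization $f_0 = (1+\beta_3)\circ \iota_x$ follows.
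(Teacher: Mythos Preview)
Your proposal is correct and follows essentially the same approach as the paper: both identify $JC_0=\Image(1+j)$ and $E_0=\Image(1+j)(1+\sigma^3+\sigma^6)$ via the subgroup structure $\langle j\rangle\subset\langle \sigma^3,j\rangle\cong D_3$, and then verify that the Abel--Jacobi embedding composed with $1+\sigma^3+\sigma^6$ recovers $f_0$ by tracking the $D_3$-orbit of a lift $x'\in\widetilde C$. Your explicit remark that $1+\sigma^3+\sigma^6=1+\beta_3$ in $\mathbb Z[\langle\sigma\rangle]$ (since $\sigma^{-3}=\sigma^6$) is the only step the paper leaves implicit, but otherwise the arguments coincide.
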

As a consequence, we get the following:
\begin{prop}\label{beta_i on elliptic curves}
    The automorphisms $\beta_1,\beta_2,\beta_4$ restrict to $-\Id$ on $E_0$, while $\beta_3$ acts as multiplication by 2.
    \begin{proof}
        It is a straightforward computation using that $E_0=\Image (1+j)(1+\sigma^3+\sigma^6)$, the fact that $\beta_i$ commutes with $(1+j)$ and that $1+\sigma+\dots+\sigma^8=0$ on $J\widetilde C$. 
    \end{proof}
\end{prop}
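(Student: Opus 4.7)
The plan is a direct computation inside the endomorphism ring of $J\widetilde{C}$, using the explicit description $E_0 = \Image\bigl((1+j)(1+\sigma^3+\sigma^6)\bigr)$ from Proposition \ref{E_0 and map f_0}. Since every $e\in E_0$ has the form $e=(1+j)(1+\sigma^3+\sigma^6)(z)$, it will suffice to evaluate $\beta_i \circ (1+j)(1+\sigma^3+\sigma^6)$ and compare it with $(1+j)(1+\sigma^3+\sigma^6)$.

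The first intermediate check I would carry out is that $\beta_i$ commutes with $1+j$. Starting from the dihedral relation $j\sigma=\sigma^{-1}j$, both $(1+j)\beta_i$ and $\beta_i(1+j)$ expand to $\sigma^i+\sigma^{-i}+\sigma^i j+\sigma^{-i} j$. This reduces the problem to computing $\beta_i(1+\sigma^3+\sigma^6)$ and checking that it equals $c_i\cdot(1+\sigma^3+\sigma^6)$ for an integer $c_i$, whenever the computation takes place on a subvariety of the Prym $P$.

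For $i=3$, the expansion $(\sigma^3+\sigma^6)(1+\sigma^3+\sigma^6)$ uses only $\sigma^9=\Id$ and yields $2(1+\sigma^3+\sigma^6)$, so $\beta_3$ acts by multiplication by $2$ on $E_0$. For $i\in\{1,2,4\}$, the expansion of $(\sigma^i+\sigma^{-i})(1+\sigma^3+\sigma^6)$ produces exactly the six terms $\sigma^k$ with $k\in\{1,2,4,5,7,8\}$ (modulo $9$). Because $E_0\subset JC_0\subset P$ and the endomorphism $\sum_{k=0}^{8}\sigma^k=f^*\circ \Nm_f$ vanishes on $P$, this sum equals $-(1+\sigma^3+\sigma^6)$ on $E_0$, so $\beta_i$ acts as $-\Id$ there.

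The only delicate bookkeeping point I anticipate is the scope of the identity $\sum_{k=0}^8\sigma^k=0$: it holds on $P$ but not on the whole of $J\widetilde{C}$. One therefore has to invoke the containment $JC_0\subset P$ (and hence $E_0\subset P$), which is built into Proposition \ref{prop caso 9}, before applying the cancellation. Beyond this, all the steps are routine combinatorial manipulations of powers of $\sigma$.
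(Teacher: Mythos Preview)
Your proposal is correct and follows exactly the approach sketched in the paper: use $E_0=\Image\bigl((1+j)(1+\sigma^3+\sigma^6)\bigr)$, commute $\beta_i$ past $(1+j)$, and reduce to the identity $\sum_{k=0}^{8}\sigma^k=0$. You are in fact more careful than the paper on one point: that identity equals $f^*\circ\Nm_f$ and hence vanishes on $P$, not on all of $J\widetilde C$ as the paper's one-line proof states; your remark that $E_0\subset JC_0\subset P$ is exactly what justifies the cancellation.
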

Let $\xi$ be a 9th primitive root of unity. In what follows, we want to describe more explicitly the automorphisms $\beta_i$. First, we recall that the following equalities hold:\begin{equation}\label{degree extension}
    [\mathbb{Q}(\xi): \mathbb{Q}]= [\mathbb{Q}(\xi): \mathbb{Q}(\xi+\xi^{-1})][\mathbb{Q}(\xi+\xi^{-1}): \mathbb{Q}]=2\cdot 3=6.
\end{equation}
The action of $\sigma$ on $H^0(\widetilde C, \omega_{\widetilde C})$ decomposes as follows (\cite[Section]{lo11}) 
\begin{equation}
    H^0(\widetilde C, \omega_{\widetilde C})= \bigoplus_{i=0}^8 H^0( C, \omega_{ C}\otimes \eta^i),
\end{equation}
where $H^0( C, \omega_{ C}\otimes \eta^i)$ is the eigenspace corresponding to the eigenvalue $\xi^i$. This yields the following:
\begin{prop}
    The automorphisms $\beta_i$ act on $T_0JC_0$ as the diagonal matrix $$diag(\xi^i+\xi^{-i}, \xi^{2i}+\xi^{-2i}, \xi^{3i}+\xi^{-3i}, \xi^{4i}+\xi^{-4i}).$$ 
\end{prop}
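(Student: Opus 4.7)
The plan is to transfer the computation to $H^0(\widetilde C, \omega_{\widetilde C})$, where the eigenspace decomposition under $\sigma$ has already been recalled from \cite{lo11}, and to identify $H^0(C_0, \omega_{C_0})$ explicitly as the $j^*$-stable subspace, on which $\beta_i$ can then be diagonalised by hand.

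First, I would observe that the double cover $\pi_0 \colon \widetilde C \to C_0$ induces an injection $\pi_0^* \colon H^0(C_0, \omega_{C_0}) \hookrightarrow H^0(\widetilde C, \omega_{\widetilde C})$ whose image is precisely the $j^*$-invariant subspace. Via Riemann--Roch on $C$, each summand $H^0(C, \omega_C \otimes \eta^k)$ with $1 \le k \le 8$ is $1$-dimensional, since $\omega_C \otimes \eta^k$ has degree $2$ and $h^0(\eta^{-k})=0$ (as $\eta^{-k}$ is a non-trivial degree-$0$ bundle). On the $\sigma$-invariant part $H^0(C, \omega_C)$, the involution $j^*$ acts as $-\mathrm{Id}$, since $j$ lifts the hyperelliptic involution of $C$. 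The dihedral relation $j \sigma = \sigma^{-1} j$ gives $\sigma^* \circ j^* = j^* \circ \sigma^{-*}$, so $j^*$ swaps the $\xi^k$- and $\xi^{-k}$-eigenspaces of $\sigma^*$. Pairing the four non-trivial pairs produces an explicit basis
\[
\{\omega_k + j^*\omega_k : k = 1, 2, 3, 4\}
\]
of $\pi_0^* H^0(C_0, \omega_{C_0})$, where $\omega_k$ spans the $\xi^k$-eigenspace; this also reconfirms $g(C_0) = 4$.

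Next, using $\sigma^i \circ j^* = j^* \circ \sigma^{-i}$ on cotangent vectors, a direct calculation gives
\[
\beta_i(\omega_k + j^*\omega_k) = (\xi^{ki} + \xi^{-ki})\omega_k + (\xi^{-ki} + \xi^{ki})\, j^*\omega_k = (\xi^{ki} + \xi^{-ki})(\omega_k + j^*\omega_k).
\]
Hence $\beta_i$ acts diagonally on $H^0(C_0, \omega_{C_0})$ with eigenvalues $\xi^{ki} + \xi^{-ki}$ for $k=1,2,3,4$. Passing to $T_0 JC_0$, whose analytic representation is the $\CC$-linear dual of the action on $H^0(C_0, \omega_{C_0})$, preserves these eigenvalues, yielding the stated diagonal matrix.

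The argument is essentially linear algebra in the dihedral representation on the eigenspaces of $\sigma$, so there is no substantive obstacle. The only care needed is to apply the dihedral intertwining relations correctly and to verify that the $\sigma$-invariant part of $H^0(\widetilde C, \omega_{\widetilde C})$, where $j^*$ acts by $-\mathrm{Id}$, contributes nothing to $H^0(C_0, \omega_{C_0})$, so that the four basis vectors above really do exhaust a full basis of cotangent vectors at the origin of $JC_0$.
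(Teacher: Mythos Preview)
Your proof is correct and follows exactly the approach the paper intends: the paper states the proposition immediately after recalling the eigenspace decomposition $H^0(\widetilde C,\omega_{\widetilde C})=\bigoplus_{i=0}^8 H^0(C,\omega_C\otimes\eta^i)$ with the words ``This yields the following'', leaving the verification implicit. You have simply written out that implicit verification---identifying $H^0(C_0,\omega_{C_0})$ with the $j^*$-invariant subspace, using the dihedral relation to see that $j^*$ swaps the $\xi^k$- and $\xi^{-k}$-eigenlines, and then reading off the eigenvalues of $\beta_i$---so the two arguments coincide.
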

Therefore, up to permutation of the eigenspaces, the automorphisms $\beta_1,\beta_2,\beta_4$ correspond to the matrix ${diag}(\xi+\xi^{8}, \xi^2+\xi^{7}, \xi^4+\xi^5, -1)$ while $\beta_3$ is given by $diag(-1,-1,-1,2)$. As an immediate consequence, we get the following:
\begin{prop}\label{prym GL2}
    The Prym variety $P(C_0, E_0)$ is of $GL_2$-type.
    \begin{proof}
        Using the description of the automorphisms $\beta_i: JC_0\ra JC_0$ as a diagonal matrix, together with Proposition \ref{beta_i on elliptic curves}, we obtain that they restrict to automorphisms of $P(C_0, E_0)$. Therefore $F:=\mathbb{Q}(\xi+\xi^{-1})$ is embedded in $\End_0(P(C_0, E_0))$. Since, by \eqref{degree extension}, $F$ is a totally real cubic field, we conclude that $P(C_0, E_0)$ is of $GL_2$-type.  
    \end{proof}
\end{prop}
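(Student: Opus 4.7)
The plan is to produce an embedding of the totally real cubic field $F := \mathbb{Q}(\xi+\xi^{-1})$ into $\End_0(P(C_0,E_0))$ by restricting the automorphism $\beta_1$. Since $\dim P(C_0,E_0) = g(C_0)-g(E_0) = 4-1 = 3 = [F:\mathbb{Q}]$, this is precisely what is needed by Definition \ref{var GL2 type}.

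The first step is to check that each $\beta_i$ with $i \in \{1,2,4\}$ induces an endomorphism of $P(C_0,E_0)$. By Proposition \ref{beta_i on elliptic curves}, $\beta_i$ acts as $-\Id$ on $E_0 \subset JC_0$, so $E_0$ is $\beta_i$-stable; hence $\beta_i$ descends to an endomorphism of the quotient $JC_0/E_0$, which is isogenous to $P(C_0,E_0)$. Since we work in $\End_0$, this suffices to regard $\beta_i$ as an element of $\End_0(P(C_0,E_0))$.

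The second step is to read off the eigenvalues of the induced endomorphism from the diagonal description of $\beta_1$ on $T_0 JC_0$, which exhibits the eigenvalues $\xi+\xi^{-1},\, \xi^2+\xi^{-2},\, \xi^3+\xi^{-3},\, \xi^4+\xi^{-4}$. Observing that $\xi^3$ is a primitive cube root of unity gives $\xi^3+\xi^{-3} = -1$, matching the $-1$ eigenvalue on the line $T_0 E_0$ provided by Proposition \ref{beta_i on elliptic curves}. Consequently, the induced endomorphism on the three-dimensional space $T_0 P(C_0,E_0)$ has the remaining eigenvalues $\xi+\xi^{-1},\, \xi^2+\xi^{-2},\, \xi^4+\xi^{-4}$, which are pairwise distinct and constitute the full $\Gal(\mathbb{Q}(\xi)/\mathbb{Q})$-orbit of $\xi+\xi^{-1}$.

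The conclusion is then formal: the minimal polynomial of $\beta_1\vert_{P(C_0,E_0)}$ over $\mathbb{Q}$ agrees with the minimal polynomial of $\xi+\xi^{-1}$, a cubic, yielding $\mathbb{Q}(\beta_1\vert_{P(C_0,E_0)}) \cong F \hookrightarrow \End_0(P(C_0,E_0))$. The only subtle point is to correctly identify which eigenline of $\beta_1$ on $T_0 JC_0$ corresponds to $T_0 E_0$; once the coincidence $\xi^3+\xi^{-3} = -1$ is noted and Proposition \ref{beta_i on elliptic curves} invoked, the remaining bookkeeping is routine.
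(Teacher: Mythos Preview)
Your proof is correct and follows essentially the same route as the paper: both use the diagonal description of $\beta_i$ on $T_0 JC_0$ together with Proposition~\ref{beta_i on elliptic curves} to see that $\beta_1$ induces an endomorphism of $P(C_0,E_0)$, and then observe that $\mathbb{Q}(\xi+\xi^{-1})$ is a cubic field matching $\dim P(C_0,E_0)=3$. The only cosmetic difference is that the paper phrases the first step as ``$\beta_i$ restricts to the subvariety $P(C_0,E_0)\subset JC_0$'' (immediate from the diagonal form), whereas you pass through the isogenous quotient $JC_0/E_0$; both are valid and your added detail on the minimal polynomial is a welcome elaboration of what the paper leaves implicit.
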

Now we have all the ingredients to prove the following:
\begin{thm}\label{thm_d=9}
    The Prym map $\mathcal{P}_9: \mathcal{R}_9\ra \mathcal{A}_8^{(1,\dots,1,9)}$ is generically injective.
    \begin{proof}
        The proof traces the one given in case of $d, k$ prime numbers. The main difference here is that instead of recovering  $h_0: C_0\ra \PP^1$, we recover the map $f_0: C_0\ra E_0$. 

        Let $(P,\tau)$ be a general element in $\Image(\mathcal{P}_9)$. By Proposition \ref{autom Prym}, we obtain the triplet $(P, \tau, \langle\sigma\rangle)$. Thus, as in \eqref{set automorphisms}, we define the set $\Lambda(P,\tau, \langle\sigma\rangle)$. Since, by Proposition \ref{prop caso 9} also in case of $d=9$
    the maps $\psi_i$ are isomorphisms of polarized abelian varieties, we can use Proposition \ref{recovering C_0 and beta} to recover the curve $C_0$. Using the same Proposition (the second part of its proof where property $(\ast\ast$) is exploited) one can  show that $\gamma$ is of the form  $\phi^{-1}\beta_i\phi$ for an $i \in \{ 1,2,3,4 \}$ and for $\phi$ an automorphism of $JC_0$ (compatible with the principal polarization -but this will not be used-). 

        Now, using the obtained automorphisms $\gamma$'s,  we consider the 1-dimensional images $\Image(\gamma+\Id)$. By Proposition \ref{beta_i on elliptic curves}, these images are the curves $E_0$ or $\phi(E_0)$. We do not take into account higher dimensional images, i.e. the ones coming from $\beta_i$ or from $\phi^{-1}\beta_i\phi$ for $i=1,2,4$. By Proposition \ref{E_0 and map f_0}, we get the map $f_0$ in case of $\gamma=\beta_3$, resp. the map $C_0\ra \phi(E_0)$ in case of $\gamma=\phi^{-1}\beta_3\phi$. In this way we conclude the result. Indeed, by definition, the Galois closure of $\widetilde C\ra E_0$ does not depend on the automorphism of the base. This yields the element $\widetilde  C\ra C \in \mathcal{R}_9$  we are looking for.
   \end{proof}
\end{thm}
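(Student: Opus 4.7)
The plan is to adapt the four-step strategy of Theorem~\ref{main-thm} to $d=9$, with only the last step substantially modified: since $9$ is not prime, diagram~\eqref{diagram case 9} contains the intermediate elliptic quotients $E_i$, and the geometric reconstruction will go through the non-Galois triple cover $f_0:C_0\to E_0$ instead of a theta-duality argument on a map $h_0:C_0\to\mathbb P^1$. The theta-dual calculation of Proposition~\ref{theta-dual} crucially relied on the primality of $d$, so a different mechanism is needed here to recover the degree-$9$ cover.

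First, I would apply Proposition~\ref{autom Prym} to a generic $(P,\tau)\in\Image(\mathcal P_9)$ to recover the cyclic subgroup $\langle\sigma\rangle\le\Aut(P,\tau)$; inspection shows that its proof uses only the polarization, not primality of $d$. Next, Proposition~\ref{prop caso 9} guarantees that the isomorphisms $\psi_i:JC_0\times JC_0\to P$ still satisfy properties~$(\ast)$ and~$(\ast\ast)$, so I can form the set $\Lambda(P,\tau,\langle\sigma\rangle)$ as in~\eqref{set automorphisms} and invoke Proposition~\ref{recovering C_0 and beta} to recover $C_0$ together with an endomorphism $\gamma\in\End(JC_0)$. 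A caveat: because $JC_0$ is no longer simple (it is isogenous to $E_0\times P(C_0,E_0)$), the argument in part~(b) of that proposition, which in the prime case forced the diagonal blocks of the matrix $F$ to be $\pm\Id$ via Proposition~\ref{automorphism}, now only yields some $\phi\in\Aut(JC_0)$ in those entries, and the resulting conclusion is that $\gamma=\phi^{-1}\beta_i\phi$ for some $i\in\{1,2,3,4\}$.

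The third step singles out the conjugates of $\beta_3$ among the recovered $\gamma$'s. By the tangent-space description preceding Proposition~\ref{prym GL2} together with Proposition~\ref{beta_i on elliptic curves}, the operator $\beta_3+\Id$ restricts to $\mathrm{diag}(0,0,0,3)$ on $T_0JC_0$, so $\Image(\beta_3+\Id)\subset JC_0$ is one-dimensional and equals $E_0$; for $i\in\{1,2,4\}$, by contrast, the operator $\beta_i+\Id$ vanishes only on the tangent line to $E_0$ and is invertible on the complementary factor, producing a three-dimensional image. This dimensional dichotomy is preserved under conjugation by $\phi$, so picking out those $\gamma$ with $\dim\Image(\gamma+\Id)=1$ isolates the conjugates of $\beta_3$; Proposition~\ref{E_0 and map f_0} then yields the triple cover $f_0:C_0\to E_0$ (or an equivalent map $C_0\to\phi(E_0)$).

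Finally, I would reconstruct $[f:\widetilde C\to C]\in\mathcal R_9$ from $f_0$: the Galois closure of the non-Galois triple cover $f_0:C_0\to E_0$ is precisely $\widetilde C\to E_0$, which is already Galois of degree~$6$ with group $D_3=\langle j,\sigma^3\rangle$, and this closure depends on $f_0$ only up to isomorphism of the target, so the ambiguity from $\phi$ is harmless. For a generic covering $\Aut(\widetilde C)=D_9$, which contains a unique cyclic subgroup of order~$9$, namely $\langle\sigma\rangle$, so the quotient $\widetilde C/\langle\sigma\rangle=C$ recovers the original covering $f:\widetilde C\to C$. The main obstacle I anticipate is the third step: separating $\beta_3$ from the other $\beta_i$ by the dimension of $\Image(\gamma+\Id)$ in the presence of the unknown conjugation by $\phi$, and verifying that generically $\Aut(\widetilde C)=D_9$ so that $\langle\sigma\rangle$ is intrinsically distinguished; once these are settled the Galois-theoretic reconstruction goes through automatically.
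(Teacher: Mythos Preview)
Your proposal is correct and follows essentially the same route as the paper's proof: recover $\langle\sigma\rangle$ via Proposition~\ref{autom Prym}, recover $C_0$ and the conjugates $\gamma=\phi^{-1}\beta_i\phi$ via Propositions~\ref{prop caso 9} and~\ref{recovering C_0 and beta}, isolate the case $i=3$ by the one-dimensionality of $\Image(\gamma+\Id)$ using Proposition~\ref{beta_i on elliptic curves}, obtain $f_0$ from Proposition~\ref{E_0 and map f_0}, and finish by a Galois-closure argument. Your final paragraph is slightly more explicit than the paper's (you spell out that the Galois closure of the non-Galois triple cover $f_0$ is $\widetilde C\to E_0$ and then invoke $\Aut(\widetilde C)=D_9$ generically to single out $\langle\sigma\rangle$), whereas the paper compresses this into the single sentence about the Galois closure being independent of the base automorphism; but the underlying argument is the same.
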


\section{Case $d=13$} \label{d=13}
This section is devoted to the analysis of the case $d=13$, where $d$ is a prime number, whereas $k=6$ no longer is. This prevents us to use the full Proposition \ref{recovering C_0 and beta}, so one has to characterize the set $\Lambda(P,\tau, \langle\sigma\rangle)$ defined as in \eqref{set automorphisms}. The diagram to keep in mind is the following:
\begin{equation}\label{diagram case 13}
\begin{tikzcd}
&\widetilde C \arrow[dl,swap,"f"] \arrow[dr,"\pi_0"] &\\
C\arrow[dr,swap,"\varepsilon"] & & C_0 \arrow[dl,swap,"h_0"]\\
& \PP^1&
\end{tikzcd}
\end{equation}
We recall that $g(C_0)=6$ and that $\beta_i=\sigma^i+\sigma^{-i}$ are automorphisms of $JC_0$ not preserving the polarization. 
The second step of section \ref{section 3} yield the following:
\begin{prop}
    The Jacobian variety $JC_0$ is of $GL_2$-type which is either simple or is isogenous to $X^r$, where $X$ is a simple abelian variety of $GL_2$-type and $r\in \mathbb{N}$. Moreover, any automorphism of the curve $C_0$ is an involution.
    \begin{proof}
        It follows from Propositions \ref{decomposition JC_0}, \ref{order_2}, and \ref{prop wu}(\S 1). 
    \end{proof}
\end{prop}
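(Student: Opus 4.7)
The plan is to invoke, in order, the three arithmetic results already used for general prime $d$, checking that their proofs go through verbatim at $d=13$, $k=6$. First, Proposition \ref{decomposition JC_0} yields that $JC_0$ is of $GL_2$-type: with $\xi$ a primitive $13$th root of unity, the field $E=\mathbb Q(\xi+\xi^{-1})$ embeds into $\End_0(JC_0)$ via $\beta_1=\sigma+\sigma^{-1}$, and $[E:\mathbb Q]=(d-1)/2=6=\dim JC_0$. Nothing in that argument uses primality of $k$.

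Next I would argue that $JC_0$ is not of CM type for a generic covering. The moduli space $\mathcal R_{13}$ is three-dimensional, the morphism $[C,\langle\eta\rangle]\mapsto[C_0]$ has positive-dimensional image in $\mathcal M_6$, and there are only countably many isomorphism classes of CM abelian varieties; hence $JC_0$ cannot be CM at a generic point of $\mathcal R_{13}$. Applying Proposition \ref{prop wu}(1) then provides an isogeny $JC_0\sim X^r$ with $X$ simple of $GL_2$-type. Since simplicity is an isogeny invariant, the case $r=1$ gives $JC_0$ simple, while $r\ge 2$ gives the second alternative in the statement.

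For the involution claim I would copy the proof of Proposition \ref{order_2} without modification. If $\phi\in\Aut(C_0)$ has prime order $p\ge 3$, then $\mathbb Q(\zeta_p)\hookrightarrow\End_0(JC_0)$; picking a quadratic subfield $\mathbb Q(\alpha)\subset\mathbb Q(\zeta_p)$ (which exists since $2\mid p-1$) and taking the compositum with $E$ produces a subfield $\mathbb Q(\alpha,\xi+\xi^{-1})\subseteq\End_0(JC_0)$ of degree $2\cdot 6=12=2\dim JC_0$, forcing $JC_0$ to be CM and contradicting the previous paragraph. Orders $2^t$ with $t\ge 2$ and composite orders reduce to this case exactly as in the proof of Proposition \ref{order_2}, so any non-trivial automorphism of $C_0$ must be an involution.

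The main reason one cannot push further, and obtain the analogue of Proposition \ref{automorphism} saying that the induced action on $JC_0$ is $\pm\Id$, is purely arithmetic: with $k=6$ composite, the relation $r\cdot\dim X=6$ admits $r\in\{1,2,3,6\}$, so the dimension-count argument that previously forced $r=1$ or ruled out elliptic factors against $\mathcal M_{1,1}$ no longer closes. Extracting more structure from $JC_0$, and thereby making Proposition \ref{recovering C_0 and beta} applicable, will require extra input — presumably a finer study of intermediate quotients in the spirit of the role played by $E_0$ in the $d=9$ case — which is precisely what keeps $\mathcal P_{13}$ from being settled by the methods of Section \ref{section 3} alone.
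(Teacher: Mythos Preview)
Your proposal is correct and follows essentially the same approach as the paper: the paper's proof is the one-line ``It follows from Propositions \ref{decomposition JC_0}, \ref{order_2}, and \ref{prop wu}(\S 1)'', and you have simply unpacked those three citations, made explicit the countability argument excluding the CM case (which is already embedded in the proof of Proposition \ref{order_2}), and checked that nothing depends on the primality of $k$. Your closing paragraph on why $k=6$ blocks the analogue of Proposition \ref{automorphism} is accurate additional context, matching what the paper goes on to say in the subsequent corollary and discussion.
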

We have the following:
\begin{cor}
    For a general $\widetilde C\ra C$, the Jacobian variety $JC_0$ is either simple or isogenous to the square of a simple abelian threefold. In the first case, any involution induced on $JC_0$ from an automorphism of $C_0$ is $\pm \Id$.
    \begin{proof}
    Indeed, we have to exclude that $JC_0$ is one of the following cases: 
        \begin{enumerate}
            \item $E^6$, where $E$ is an elliptic curve, 
            \item $S^3$, where $S$ is a $GL_2$-type abelian surface.
        \end{enumerate}
    This follows from a dimensional argument.  The first case is excluded because the moduli of elliptic curves is 1-dimensional. In the second case  the moduli of abelian surfaces is 3-dimensional, but the endomorphism algebra of the generic element is $\mathbb{Z}$ and this is not compatible with our assumptions on the endomorphism algebra of $S$, so it can be also ruled out. 
           \end{proof}
\end{cor}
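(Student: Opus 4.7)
The plan is to combine the structure theorem for non-CM $GL_2$-type abelian varieties (Wu, recalled in Proposition \ref{prop wu}) with a moduli dimension count, and then handle the involutions in the simple case by an elementary division-algebra argument.

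From the preceding Proposition we already know that $JC_0$ is of $GL_2$-type and, since there are only countably many CM isogeny classes, for a general $\widetilde C\to C$ the Jacobian is isogenous to $X^r$ with $X$ simple and of $GL_2$-type. As $r\dim X=6$, the possible pairs are $(r,\dim X)=(1,6),(2,3),(3,2),(6,1)$. The first two are precisely the outcomes allowed in the statement, so the remaining task is to exclude $JC_0\sim S^3$ and $JC_0\sim E^6$.

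I would handle these two excluded cases by a direct dimension count against the $3$-dimensional base $\mathcal{R}_{13}$, using that $C_0$ and hence $JC_0$ genuinely varies in a $3$-dimensional family (the factorisation of $\mathcal P_{13}$ through $\widetilde{\mathcal D}_k\to\mathcal M_k$ combined with the generic finiteness of the Prym map for $d\geq 7$ guarantees this). For $JC_0\sim E^6$: taking $E$ with $\End E=\mathbb Z$, principally polarized abelian $6$-folds isogenous to $E^6$ correspond to positive-definite unimodular symmetric integer $6\times 6$ matrices modulo $GL_6(\mathbb Z)$, which is a finite set; the whole locus of such abelian varieties in $\mathcal A_6$ therefore fibers discretely over the $j$-line and is at most $1$-dimensional. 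For $JC_0\sim S^3$: a simple $GL_2$-type abelian surface $S$ carries real multiplication by some real quadratic field $F$ and moves in the $2$-dimensional Hilbert modular surface of $F$ (with countably many choices of $F$); for each such $S$ the principally polarized abelian $6$-folds isogenous to $S^3$ again form a discrete set, so the total locus in $\mathcal A_6$ is at most $2$-dimensional. In both cases the $3$-dimensional family of Jacobians $JC_0$ cannot fit, yielding the desired exclusion.

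For the second assertion, assume $JC_0$ is simple and let $\phi\in\Aut(C_0)$ be an involution, identified with its induced action on $JC_0$. Then $\phi^2=\Id$ gives $(\phi-\Id)(\phi+\Id)=0$ in $\End(JC_0)$, and simplicity forces $\End(JC_0)$ to have no zero-divisors, whence $\phi=\Id$ or $\phi=-\Id$.

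The main obstacle is the dimension comparison in the $S^3$ case: one must justify both that ``being of $GL_2$-type'' cuts a proper $2$-dimensional sublocus of the $3$-dimensional $\mathcal A_2$ and that the countable union over real quadratic fields $F$ does not restore the missing dimension; the $E^6$ case is structurally analogous but simpler, and the auxiliary claim that $C_0$ itself varies in a $3$-dimensional family rests on the factorisation properties of $\mathcal P_d$ already exploited in Section~\ref{section 3}.
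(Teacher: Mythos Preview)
Your proof is correct and uses essentially the same dimension-counting strategy as the paper. The paper's version is terser: it excludes $E^6$ simply because the $j$-line is $1$-dimensional, and excludes $S^3$ by noting that a generic abelian surface has endomorphism ring $\mathbb Z$, so the $GL_2$-type surfaces lie in a proper sublocus of the $3$-dimensional $\mathcal A_2$; it does not spell out the discreteness of principally polarised abelian varieties within a fixed isogeny class, nor the fact that the family of curves $C_0$ is genuinely $3$-dimensional, both of which you make explicit (the latter via the factorisation of $\mathcal P_d$ and its generic finiteness, the former via the finiteness of polarisation types). Your identification of the $GL_2$-locus in $\mathcal A_2$ with a countable union of Hilbert modular surfaces is a sharper form of the paper's ``generic $\End=\mathbb Z$'' remark and makes the inequality $2<3$ transparent.

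For the involution assertion, the paper's proof of this corollary actually omits it, implicitly relying on the argument already given in the Sophie Germain case (where simplicity of $JC_0$ forces the quotient $C_0/\langle\phi\rangle$ to be $\mathbb P^1$, hence $\phi$ is the hyperelliptic involution). Your algebraic argument---$(\phi-\Id)(\phi+\Id)=0$ in $\End(JC_0)$, which sits inside the division algebra $\End_0(JC_0)$---is a cleaner, self-contained alternative that bypasses the geometry of the quotient curve.
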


Thus, in order to get the generic injectivity of $\mathcal P_{13}$, it only remains to exclude the case $JC_0\sim T^2$ (which prevent us to obtain a result as in Proposition \ref{recovering C_0 and beta}). Such configuration is realized when an involution $\phi$ acts on $C_0$ with quotient a genus 3 curve $C'_0$. That is, the 2:1 covering $C_0\ra C'_0=C_0/\langle\phi\rangle$, branched in $p_1+p_2$, describes an element $(C'_0, p_1+p_2, \eta)$ in $\mathcal{R}_{3,2}$ such that $T\sim JC'_0\sim P(C_0,C'_0)$ is a $GL_2$-type threefold. A negative answer to this question would allow to conclude the
generic injectivity of $\cP_{13}$.

\end{document}